\documentclass[11pt]{article}
\linespread{1.2}
\usepackage[a4paper,hmargin={2.5cm,2.5cm},vmargin={2.5cm,2.5cm}]{geometry}

\usepackage[utf8]{inputenc}
\usepackage[T1]{fontenc}
\usepackage{lmodern}
\usepackage{mathrsfs}
\usepackage{pgfplots}
\pgfplotsset{compat=1.18}
\usepackage[english]{babel}
\usepackage[hypertexnames=false]{hyperref}
\usepackage[shortlabels]{enumitem}
\usepackage{csquotes}
\usepackage{xcolor}
\usepackage{cite}
\usepackage[leqno]{amsmath}
\usepackage{amssymb,amsthm}
\usepackage{tikz-cd}
\usepackage{dsfont}
\usepackage{graphicx}

\setlength{\emergencystretch}{3em}
\allowdisplaybreaks

\numberwithin{equation}{section}

\theoremstyle{plain}
\newtheorem{theorem}{Theorem}[section]
\newtheorem{lemma}[theorem]{Lemma}

\newtheorem{corollary}[theorem]{Corollary}
\newtheorem{definition}[theorem]{Definition}

\newtheorem{remark}[theorem]{Remark}

\newcommand{\Jb}{\mathcal J_{\lambda,\beta}}
\newcommand{\Nb}{\mathcal N_{\lambda,\beta}}
\newcommand{\I}{\mathcal I_{\lambda_i}}
\newcommand{\field}[1]{\mathbb{#1}}
\newcommand{\N}{\field{N}}

\newcommand{\R}{\field{R}}

\newcommand{\wto}{\rightharpoonup}

\tikzset{%
    symbol/.style={%
        draw=none,
        every to/.append style={%
            edge node={node [sloped, allow upside down, auto=false]{$#1$}}}
    }
}
\tikzset{shorten <>/.style={shorten >=#1,shorten <=#1}}

\title{Quasilinear Elliptic Cooperative and Competitive Systems}
\author{
Annamaria Canino\\
\small Dipartimento di Matematica e Informatica, Universit\`a della Calabria,\\
\small Ponte Pietro Bucci, cubo 31B, 87036 Arcavacata di Rende, Cosenza, Italy\\
\small \texttt{annamaria.canino@unical.it}
\and
Simone Mauro\\
\small Dipartimento di Matematica e Informatica, Universit\`a della Calabria,\\
\small Ponte Pietro Bucci, cubo 31B, 87036 Arcavacata di Rende, Cosenza, Italy\\
\small \texttt{simone.mauro@unical.it}
}
\date{\today}

\begin{document}

\maketitle

\begin{abstract}
We study the existence and multiplicity of weak solutions for the following quasilinear elliptic system:
\[
\resizebox{\linewidth}{!}{$\displaystyle
\begin{cases}
-\mathrm{div}(A_1(x,u_1)\nabla u_1) + \displaystyle\frac{1}{2} D_{u_1}A_1(x,u_1)\nabla u_1 \cdot \nabla u_1 = \lambda_1 u_1 + g_{\beta,1}(u) & \text{in } \Omega, \\[3mm]
-\mathrm{div}(A_2(x,u_2)\nabla u_2) + \displaystyle\frac{1}{2} D_{u_2}A_2(x,u_2)\nabla u_2 \cdot \nabla u_2 = \lambda_2 u_2 + g_{\beta,2}(u) & \text{in } \Omega, \\[2mm]
u_1 = u_2 = 0 & \text{on } \partial\Omega,
\end{cases}
$}
\]
where $\lambda_1, \lambda_2 < \mu_1$, $\mu_1$ is the first Dirichlet eigenvalue of the Laplacian, and $\Omega$ is a bounded domain. 
The nonlinearity is derived from a potential $G_\beta$ with subcritical growth.

 We prove the existence of least energy solutions in both the cooperative ($\beta > 0$) and competitive ($\beta < 0$) regimes.
Due to the lack of differentiability of the associated energy functional, we employ nonsmooth critical point theory and variational methods based on the concept of weak slope. 

\end{abstract}

\noindent \textbf{Keywords:} Subcritical nonlinearities, gradient elliptic systems, least energy solutions, mixed cooperation and competition, Dirichlet boundary conditions, quasilinear elliptic equations, nonsmooth critical point theory.\\
\noindent \textbf{2020 MSC:} 35A01, 35A15, 35J05, 35J20, 35J25.

\section{Introduction} 
Let $\Omega\subset\R^N$ be a bounded domain and let $G_\beta:\R^2\to\R$ be the $C^1$-function defined by
\begin{equation}\tag{$g.1$}\label{g.1}
  G_\beta(t_1,t_2)=\frac{1}{p}\left(|t_1|^p+2\beta|t_1|^{\frac p2}|t_2|^{\frac p2}+|t_2|^p\right),\quad \beta\in\R,\ p\in(2,2^*).  
\end{equation}
Setting $g_\beta=(g_{\beta,1},g_{\beta,2})=\nabla G_\beta$, i.e.,
\[
g_\beta(t_1,t_2)=
\begin{bmatrix}
    |t_1|^{p-2}t_1+\beta|t_1|^{\frac p2-2}t_1|t_2|^{\frac p2}\\
    |t_2|^{p-2}t_2+\beta|t_2|^{\frac p2-2}t_2|t_1|^{\frac p2}
\end{bmatrix},
\]
we consider the quasilinear elliptic system
{\footnotesize
\begin{equation}\tag{$\mathcal P_\beta$}\label{Qb}
\begin{cases}
   -\text{div}(A_1(x,u_1)\nabla u_1)+\frac12 D_{u_1}A_1(x,u_1)\nabla u_1\cdot\nabla u_1=\lambda_1u_1+g_{\beta,1}(u)&\text{in $\Omega$},\\
     -\text{div}(A_2(x,u_2)\nabla u_2)+\frac12 D_{u_2}A_2(x,u_2)\nabla u_2\cdot\nabla u_2=\lambda_2u_2+g_{\beta,2}(u)&\text{in $\Omega$},\\
    u_1=u_2=0&\text{on $\partial\Omega$},
\end{cases}
\end{equation}
}
where $\lambda_1,\lambda_2<\mu_1$, and $\mu_1$ denotes the first eigenvalue of the Laplacian with Dirichlet boundary conditions.

Throughout the paper, we assume that $A_i(x,s)$ are symmetric matrices whose entries $(a_i)_{j,k}:\Omega\times\R\to\R$, for $j,k=1,\dots,N$ and $i=1,2$, are $C^1$-Carathéodory functions such that there exist $\nu\in(0,1]$ and $C_0>0$ satisfying
{\small
\begin{align}
\tag{$a.1$}\label{a.0}
&|(a_i)_{j,k}(x,s)|, |D_s(a_i)_{j,k}(x,s)|\le C_0,\quad j,k=1,\dots,N,\\
\tag{$a.2$}\label{a.1}
&A_i(x,s)\xi\cdot\xi\ge\nu|\xi|^2,\\
\tag{$a.3$}\label{a.2}
   &  0\le sD_{s}A_i(x,s)\xi\cdot\xi\le\gamma A_i(x,s)\xi\cdot\xi,\ \ \gamma\in(0,p-2),\\
   \tag{$a.4$}\label{a.3}
   &(0,+ \infty)\ni s\mapsto s^{3-p}D_sA_i(x,s)\xi\cdot\xi\ \ \text{is strictly decreasing for every $\xi\in\R^N$},
\end{align}
}
 for a.e. $x\in\Omega$ and every $(s,\xi)\in\R\times\R^N$ and for every $i=1,2$. 
 
We emphasize that assumption \eqref{a.3} is needed exclusively to handle the
competitive case $\beta<0$ and can be omitted in the cooperative regime
$\beta>0$.

We point out that if $A_i(x,s)$ is the identity matrix, we obtain the regular
semilinear case of the Laplacian:
\[
\begin{cases}
    -\Delta u_1=\lambda_1u_1+|u_1|^{p-2}u_1+\beta|u_1|^{\frac p2-2}u_1|u_2|^{\frac p2}&\text{in $\Omega$},\\
    -\Delta u_2=\lambda_2u_2+|u_2|^{p-2}u_2+\beta|u_2|^{\frac p2-2}u_2|u_1|^{\frac p2}&\text{in $\Omega$},\\
    u_1=u_2=0&\text{on $\partial\Omega$}.
\end{cases}
\]

This semilinear system has attracted considerable attention over the last two
decades. It arises in several physical applications; for instance, when
$p=4$, the complex-valued functions
$\psi_1(t,x)=e^{i\lambda_1t}u_1(x)$ and
$\psi_2(t,x)=e^{i\lambda_2t}u_2(x)$ represent solitary wave solutions to the
cubic Schr\"odinger system in nonlinear optics:

\[
\begin{cases}
    i\frac{\partial \psi_1}{\partial t} - \Delta \psi_1 = |\psi_1|^2 \psi_1 + \beta |\psi_2|^2 \psi_1 & \text{in } \Omega \times (0, +\infty), \\
    i\frac{\partial \psi_2}{\partial t} - \Delta \psi_2 = |\psi_2|^2 \psi_2 + \beta |\psi_1|^2 \psi_2 & \text{in } \Omega \times (0, +\infty).
\end{cases}
\]

The parameter $\beta\in\R$ represents the interaction between the two waves,
with positive values indicating attractive interaction and negative values
repulsive interaction.

For Dirichlet boundary conditions in the semilinear regime, we refer, among
many contributions, to
\cite{ambrosetti2007standing,chen2012positive,chen2015positive,clapp2019simple,clapp2022solutions,maia2006positive,mandel2015minimal,noris2010existence,sirakov2007least,soave2016new,tavares2010tesi}
and the references therein, while Neumann boundary conditions have been
addressed more recently, see, e.g., \cite{mauro2026least}.

In the cooperative regime $\beta>0$, a nontrivial solution can be
obtained by the Mountain Pass Theorem or, equivalently, by minimizing the
energy functional on the Nehari manifold
{\small
\[
\mathcal M_{\lambda,\beta}:=
\left\{\begin{aligned}
u\in H_0^1(\Omega;\R^2)\setminus\{0\}:\quad
&\sum_{i=1}^2\left(\int_\Omega|\nabla u_i|^2
-\lambda_i\int_\Omega u_i^2\right)\\
&=\int_\Omega\left(|u_1|^p+|u_2|^p
+2\beta|u_1|^{\frac p2}|u_2|^{\frac p2}\right)
\end{aligned}\right\}.
\]
}
For $\beta>0$ sufficiently large, a comparison with the least energy levels
of the scalar problems shows that the resulting solutions are fully
nontrivial. By contrast, for $\beta<0$, as well as for $\beta>0$ sufficiently
small, solutions at the mountain-pass level may be semi-trivial. To obtain a
fully nontrivial least energy solution, one is therefore led to minimize on
the componentwise Nehari set $\Nb$, where the Nehari identity is imposed
separately on each component. 

The study of general \emph{quasilinear} elliptic problems introduces significant
additional mathematical challenges. The existence of multiple solutions for
a broader class of quasilinear elliptic scalar equations of the form
\[
-\operatorname{div}\!\left(A(x,u)\nabla u\right)
\;+\;\frac12 D_u A(x,u)\,\nabla u\cdot\nabla u
= g(x,u),
\]
where $g(x,u)$ is a $C^1$-Carathéodory function satisfying an
Ambrosetti--Rabinowitz type condition, has been established in
\cite{caninoquasilineare1,caninoserdica,canino2025neumann}.
Quasilinear elliptic systems have also been investigated in
\cite{arioli2000existence,candela2021existence,candela2022multiple,
caninomauro2026multiplicity,mauro2026multiplicity,squassina2009existence}.
However, these works focus mainly on the multiplicity of solutions, typically
by means of an Equivariant Mountain Pass Theorem, but do not address the
existence of least energy solutions or the competitive case $\beta<0$. In
contrast, in this paper we study least energy solutions in both the
cooperative and competitive regimes.

Problem \eqref{Qb} has a formal variational structure with associated energy
functional $\Jb:H_0^1(\Omega;\R^2)\to\R$ defined by
\[
\begin{aligned}
\Jb(u)&=\sum_{k=1}^2\left[\frac12\int_\Omega 
A_k(x,u_k)\nabla u_k\cdot\nabla u_k 
-\frac{\lambda_k}{2}\int_\Omega u_k^2 
-\frac1p\int_\Omega |u_k|^p\right]
\\
&\quad-\frac{2\beta}{p}\int_\Omega
|u_1|^{\frac p2}|u_2|^{\frac p2}.
\end{aligned}
\]
This functional is continuous on $H_0^1(\Omega;\R^2)$ but, in general, is not
differentiable on the whole energy space. For every direction
$v=(v_1,v_2)\in C_c^\infty(\Omega;\R^2)$, its directional derivative is given
by
\[
\begin{aligned}
\langle\Jb'(u),v\rangle
=\sum_{i=1}^2\bigg[&
\int_\Omega A_i(x,u_i)\nabla u_i\cdot\nabla v_i
+\frac12\int_\Omega
\left(D_{u_i}A_i(x,u_i)\nabla u_i\cdot\nabla u_i\right)v_i\\
&-\lambda_i\int_\Omega u_i v_i
-\int_\Omega g_{\beta,i}(u)v_i\bigg].
\end{aligned}
\]

This leads to the following notion of solution:

\begin{definition}
A function $u\in H_0^1(\Omega;\R^2)$ is a weak solution of \eqref{Qb} if
\[
\begin{aligned}
&\int_\Omega A_i(x,u_i)\,\nabla u_i\cdot\nabla v_i
+\frac12\int_\Omega
\left(D_{u_i}A_i(x,u_i)\,\nabla u_i\cdot\nabla u_i\right)v_i\\
&\qquad=\lambda_i\int_\Omega u_i v_i
+\int_\Omega g_{\beta,i}(u)\,v_i,
\end{aligned}
\]
for every $v=(v_1,v_2)\in C_c^\infty(\Omega;\R^2)$ and every $i=1,2$.
\end{definition}

In general, however, one cannot take an arbitrary
$v\in H_0^1(\Omega;\R^2)$, since the term
\[
\int_\Omega
\left(D_{u_i}A_i(x,u_i)\,\nabla u_i\cdot\nabla u_i\right)v_i
\]
need not be well-defined when $N\ge2$.
In view of these difficulties, and in particular of the lack of smoothness of
$\Jb$, we prove the existence of weak solutions to \eqref{Qb} for
$\lambda_1,\lambda_2<\mu_1$ and $\beta\in\R$ by means of variational methods
based on the weak slope and the nonsmooth critical point theory developed in
\cite{nonsmooththeory1,corvellec1993deformation,degiovanni1994critical}.

We also note that if $u_j\equiv0$ for some $j\in\{1,2\}$, then $u_i$, with
$i\ne j$, is a weak solution of the scalar problem
{\footnotesize
\begin{equation}\tag{$Q_i$}\label{Q_i}
\begin{cases}
-\text{div}(A_i(x,u_i)\nabla u_i)+\frac12 D_{u_i}A_i(x,u_i)\nabla u_i\cdot\nabla u_i=\lambda_i u_i+|u_i|^{p-2}u_i&\text{in $\Omega$}\\
u_i=0&\text{on $\partial\Omega$}.
\end{cases}
\end{equation}
}
This type of solution is called a semi-trivial solution, and we denote by
$L_i$ the least energy level of \eqref{Q_i}. The existence of a least energy
solution of \eqref{Q_i} is investigated in Section \ref{section 4}.

The main aim of this paper is to find a fully nontrivial solution
$u=(u_1,u_2)$, i.e., $u_1,u_2\not\equiv0$, and to investigate the minimizers
of
{\small
\begin{equation}\label{eb}
    e_\beta:=\inf\left\{\Jb(u_1,u_2):\ u_1,u_2\not\equiv0,\text{$u=(u_1,u_2)\in H_0^1(\Omega;\R^2)$ solves \eqref{Qb}}\right\},
\end{equation}
}
namely, the least energy solutions of \eqref{Qb}.

We now state our main results, beginning with the cooperative case $\beta>0$.
To treat this regime, we adapt the approach of \cite{clapp2022solutions} to
the nonsmooth setting. More precisely, a minimax argument yields arbitrarily
many fully nontrivial solutions with energy level $c<\min\{L_1,L_2\}$ when
$\beta$ is sufficiently large. We then prove the existence of a least energy
solution by showing that the level $e_\beta$ is achieved.

\begin{theorem}\label{thm1.2}
    Assume that \eqref{a.0}-\eqref{a.2} hold, $A_i(x,-s)=A_i(x,s)$ and $\lambda_1,\lambda_2<\frac{p-2-\gamma}{p-2}\nu\mu_1$. Then, for every $k\in\N$ there exists $\beta_k>0$ such that the problem \eqref{Qb} has at least $k$ fully nontrivial weak solutions for every $\beta>\beta_k$. Furthermore, for such $\beta$, $e_\beta$ is achieved by some $u\in H_0^1(\Omega;\R^2)$, i.e. $u$ is a least energy solution of \eqref{Qb}.
\end{theorem}

We next consider the competitive case $\beta<0$, which is more delicate
because a critical point obtained by a linking argument may be semi-trivial.
We therefore minimize the energy functional on the set
 \[
 \resizebox{\linewidth}{!}{$\displaystyle
 \Nb:=\left\{ u\in \mathcal H:\ 
 \begin{aligned}
 \int_\Omega A_i(x,u_i)\nabla u_i\cdot\nabla u_i
 &+\frac12\int_\Omega
 \left(D_{u_i}A_i(x,u_i)\nabla u_i\cdot\nabla u_i\right)u_i\\
 &-\lambda_i\int_\Omega u_i^2
 =\int_\Omega |u_i|^p
 +\beta\int_\Omega |u_1|^{\frac p2}|u_2|^{\frac p2},
 \quad i=1,2
    \end{aligned} \right\}
 $}
 \]
where 
\[\mathcal H:=\left\{u=(u_1,u_2)\in H_0^1(\Omega;\R^2):\ u_1,u_2\not\equiv0\right\}.\]

Since $\Jb$ is not differentiable, the Nehari set $\Nb$ is not a
$C^1$-manifold, so standard minimization techniques on smooth manifolds do
not apply. Following \cite{clapp2019simple}, we overcome this difficulty by
constructing a homeomorphism between $\Nb$ and
$\partial B_1\times\partial B_1$, which reduces the minimization problem to a
more tractable one. This leads to our main result in the competitive regime.

\begin{theorem}\label{main thm competitive quasilinear}
    Assume that \eqref{a.0}-\eqref{a.3} hold, $A_i(x,-s)=A_i(x,s)$ and let $\beta<-1$. The system  \eqref{Qb}
has a least energy solution $u\in H_0^1(\Omega;\R^2)$ for every $\lambda_1,\lambda_2<\frac{p-2-\gamma}{p-2}\nu\mu_1$, which is also non-negative.
\end{theorem}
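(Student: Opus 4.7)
\emph{Proof plan.} In the competitive regime $\beta<-1$, a mountain-pass or linking argument applied directly to $\Jb$ may converge to a semi-trivial configuration. The natural strategy is therefore to minimize $\Jb$ on the Nehari-type set $\Nb$ and show that the infimum $e_\beta:=\inf_{\Nb}\Jb$ is attained at a fully non-trivial $u^\star$.

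Given $u=(u_1,u_2)\in\mathcal H$, I look for $(t_1,t_2)\in(0,\infty)^2$ such that $(t_1u_1,t_2u_2)\in\Nb$: this is a $2\times 2$ system whose diagonal part behaves like $c_it_i^2$ near $0$ (positive since $\lambda_i<\nu\mu_1$) and is dominated at infinity by $-t_i^p\int|u_i|^p$, thanks to $p>2+\gamma$ and \eqref{a.2}. Since $\beta<-1$, the off-diagonal term is strictly competitive and a Poincar\'e--Miranda argument on a large rectangle produces a solution; strict monotonicity inherited from \eqref{a.2}--\eqref{a.3} then yields uniqueness of the fibering. Substituting the Nehari relations back into $\Jb$ gives
\[
\Jb(u)=\left(\tfrac12-\tfrac1p\right)\sum_{i}\left[\int_\Omega A_i(x,u_i)|\nabla u_i|^2-\lambda_i\int_\Omega u_i^2\right]-\tfrac{1}{2p}\sum_i\int_\Omega D_{u_i}A_i(x,u_i)\nabla u_i\cdot\nabla u_i\,u_i,
\]
and the last term is controlled by $\tfrac{\gamma}{2p}\sum_i\int_\Omega A_i|\nabla u_i|^2$ via \eqref{a.2}. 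Combined with $A_i\ge\nu\,\mathrm{Id}$ and the Poincar\'e inequality, the threshold $\lambda_i<\tfrac{p-2-\gamma}{p-2}\nu\mu_1$ is precisely what yields $\Jb(u)\ge c\sum_i\|u_i\|_{H^1_0}^2$ with $c>0$, hence coercivity on $\Nb$; a Sobolev estimate applied to each Nehari relation (using $\beta<0$) further forces $\|u_i\|_{H^1_0}\ge\delta>0$ there, so $e_\beta>0$.

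For a minimizing sequence $(u^n)\subset\Nb$, Ekeland's variational principle in the weak-slope framework of \cite{nonsmooththeory1,corvellec1993deformation,degiovanni1994critical} produces a constrained Cerami sequence, and coercivity gives $u^n\rightharpoonup u^\star$ in $H_0^1(\Omega;\R^2)$ up to subsequence. The key difficulty is to upgrade weak to strong convergence: one relies on the truncation techniques of \cite{caninoquasilineare1,caninoserdica} to pass simultaneously to the limit in $\mathrm{div}(A_i(x,u_i^n)\nabla u_i^n)$ and in the quadratic-gradient term $D_{u_i}A_i(x,u_i^n)\nabla u_i^n\cdot\nabla u_i^n$; assumptions \eqref{a.0}--\eqref{a.3} supply the required bounds. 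Once $u^n\to u^\star$ strongly, $u^\star\in\Nb$ and $\Jb(u^\star)=e_\beta$. A nonsmooth Lagrange multiplier argument on the two-dimensional constraint $\Nb$, with multipliers forced to vanish by the transversality established in the fibering step, shows that $u^\star$ is an unconstrained critical point of $\Jb$ in the weak-slope sense, hence a weak solution of \eqref{Qb}. Finally, since $A_i(x,-s)=A_i(x,s)$ and $G_\beta$ is componentwise even, replacing $u^\star$ by $(|u^\star_1|,|u^\star_2|)$ preserves both $\Nb$-membership and the energy, yielding the non-negative least energy solution.

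\emph{Main obstacle.} The technical bottleneck is the compactness step: since $\Jb$ is differentiable only along $L^\infty$ directions, both the Cerami extraction on the constraint $\Nb$ and the passage to the limit inside the quasilinear divergence and quadratic-gradient terms demand the full nonsmooth machinery together with carefully truncated test functions. One must also rule out that the minimizing sequence slides toward a semi-trivial limit, which is precisely where the hypothesis $\beta<-1$ intervenes, forcing the coupling to remain strictly repulsive throughout the fibering and the minimization.
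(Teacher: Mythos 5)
Your overall strategy---minimize $\Jb$ on the Nehari set $\Nb$, establish coercivity and the uniform lower bounds $\|u_i\|\ge\delta$, extract an almost-critical minimizing sequence via Ekeland, and recover compactness from the quasilinear machinery of \cite{caninoquasilineare1,caninoserdica}---is the same as the paper's, and your coercivity computation and the role of the threshold $\lambda_i<\frac{p-2-\gamma}{p-2}\nu\mu_1$ are exactly right. But the step you describe as ``a nonsmooth Lagrange multiplier argument on the two-dimensional constraint $\Nb$, with multipliers forced to vanish by transversality'' is precisely the step that does not work, and it is the whole point of the paper's construction. The constraints defining $\Nb$ contain the term $\int_\Omega\left(D_{u_i}A_i(x,u_i)\nabla u_i\cdot\nabla u_i\right)u_i$, which is not a $C^1$ functional on $H_0^1(\Omega;\R^2)$ (it is not even well defined against general $H_0^1$ test directions for $N\ge2$), so $\Nb$ is not a $C^1$ manifold and no Lagrange multiplier rule is available; ``transversality of the fibering'' cannot be invoked because one cannot differentiate the constraint in the ambient space. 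The paper circumvents this by the Clapp--Szulkin device: it proves the fibering map $h_u$ has a unique critical point which is a global maximum (this is where \eqref{a.3} enters, via the strict monotonicity of $s\mapsto s^{3-p}D_sA_i(x,s)\xi\cdot\xi$), builds a homeomorphism $\mathfrak m$ from an open subset $\mathcal U\cap\mathbb T$ of the smooth manifold $\mathbb T=\partial B_1\times\partial B_1$ onto $\Nb$, applies Ekeland to $\mathcal I_{\lambda,\beta}=\Jb\circ\mathfrak m$ on $\mathbb T$, and then shows directly (using that $t_{v_n}v_n\in\Nb$ kills the component of a test function along $\mathrm{span}\{v_{n,1},v_{n,2}\}$) that the image sequence $\mathfrak m(v_n)$ is a \emph{free} $(CPS)$-sequence for $\Jb$. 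Without this (or an equivalent substitute), your argument has a genuine gap at the point where the constrained minimizer is promoted to an unconstrained critical point.

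A secondary error: your claim that for every $u\in\mathcal H$ a Poincar\'e--Miranda argument produces $(t_1,t_2)$ with $(t_1u_1,t_2u_2)\in\Nb$ is false when $\beta<-1$. For $u_1=u_2=v$ the two Nehari relations would force $0<\left(t_i^p+\beta t_1^{p/2}t_2^{p/2}\right)\int_\Omega|v|^p$ for $i=1,2$, which is impossible for $\beta<-1$ (taking $i$ with $t_i=\min\{t_1,t_2\}$). Only the proper subset $\mathcal U\subsetneq\mathcal H$ of pairs admits a projection onto $\Nb$; this is harmless for nonemptiness of $\Nb$ (disjoint supports suffice), but it means the domain $\mathcal U$ must be shown to be open with $t_u$ continuous and with $\|\mathfrak m(u_n)\|\to+\infty$ as $u_n$ approaches $\partial\mathcal U\cap\mathbb T$, as the paper does, rather than being all of $\mathcal H$.
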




\section{Preliminaries}

We recall some results about the critical point theory of continuous functionals, developed in \cite{nonsmooththeory1}. In this setting, we consider $(X,d)$ a metric space and $f: X \to \R$ a continuous functional.

\begin{definition} \label{definition 1}
Let $X$ be a metric space and let $f: X \to \mathds{R}$ be a continuous function. We consider $\sigma \ge 0$ such that there exist $\delta > 0$ and a continuous map $\mathscr{H}: B_{\delta}(u) \times [0, \delta] \to X$ such that
\begin{align}
\label{condition 1}
d(\mathscr{H}(v, t), v) \le t, \\
\label{condition 2}
f(\mathscr{H}(v, t)) \le f(v) - \sigma t.
\end{align}
We define 
\[
|df|(u):=\sup\left\{
\begin{aligned}
 &\text{there exist $\delta>0$ and}\\
\sigma\ge0:\ \ \ \  &\text{$\mathscr H\in C(B_\delta(u)\times[0,\delta];X)$}\\
&\text{which satisfy \eqref{condition 1} and \eqref{condition 2}}
\end{aligned}
\right\}
\]
as the weak slope of $f$ at $u$.
\end{definition}

By the definition of weak slope, it follows that $|df|(u)\le|\nabla f|(u)$, where
\[
|\nabla f|(u)=
\begin{cases}
\displaystyle{\limsup_{v\to u}}\frac{f(u)-f(v)}{d(u,v)}&\text{if $u$ is not a local minimum},\\
0&\text{if $u$ is a local minimum}
\end{cases}
\]
is the (strong) slope introduced in \cite{de1980problemi}, see also \cite[Definition (2.8), p. 80]{degiovanni1994critical}.

\begin{theorem}[{\cite[Theorem 1.1.2]{nonsmooththeory1}}]\label{theorem 1}
Let $E$ be a normed space and $X \subset E$ an open subset. Let us fix $u \in X$ and $v \in E$ with $\|v\| = 1$. For each $w \in X$ we define
$$\overline{D}_+f(w)[v]:= \limsup_{t \to 0^+} \frac{f(w + tv) - f(w)}{t}.$$
Then $|df|(u) \ge -\limsup_{w \to u} \overline{D}_+f(w)[v]$.
\end{theorem}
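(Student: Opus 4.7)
The plan is to realize the weak slope explicitly via the linear homotopy $\mathscr{H}(w,t) := w + tv$ and verify the two conditions in Definition~\ref{definition 1}. Continuity is obvious, and since $\|v\|=1$ we get $\|\mathscr{H}(w,t)-w\| = t$ for free, so condition \eqref{condition 1} holds. The entire content is to establish the quantitative descent \eqref{condition 2}, namely $f(w+tv) \le f(w) - \sigma t$ uniformly for $w$ near $u$ and $t$ small.

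To set up the parameters, fix any $\sigma$ with $0 < \sigma < -\limsup_{w\to u}\overline{D}_+f(w)[v]$ (if this quantity is $\le 0$ the statement is trivial, since $|df|(u)\ge 0$). By definition of $\limsup$, there is $\delta_1 > 0$ such that $\overline{D}_+f(w)[v] < -\sigma$ for every $w \in B_{\delta_1}(u)$. Since $X$ is open, I may further shrink $\delta_1$ so that $B_{\delta_1}(u) \subset X$. Then I pick $\delta \in (0,\delta_1/2)$, ensuring that for every $w \in B_\delta(u)$ and $t \in [0,\delta]$ the whole segment $\{w+sv : s\in[0,t]\}$ lies inside $B_{\delta_1}(u) \subset X$, so that $\mathscr{H}$ really maps into $X$ and the pointwise Dini bound is valid along the path.

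The main obstacle is the passage from the pointwise control $\overline{D}_+f(w+sv)[v] < -\sigma$ at each point of the segment to the integrated inequality $f(w+tv) - f(w) \le -\sigma t$. I would handle this by reducing to a one-dimensional calculus lemma: set $g:[0,t]\to\R$, $g(s) := f(w+sv) + \sigma s$. Then $g$ is continuous and has upper right Dini derivative
\[
D^+ g(s) \;=\; \overline{D}_+ f(w+sv)[v] + \sigma \;<\; 0
\]
for every $s \in [0,t)$. The classical fact I need is that a continuous real function with everywhere strictly negative upper Dini derivative on an interval is non-increasing; this is proved by the standard contradiction argument, considering the supremum of $s^* \in [0,t]$ for which $g(s^*)>g(0)+\varepsilon$ and using continuity plus the sign of $D^+g$ to contradict the definition of $s^*$. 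Applying this to $g$ gives $g(t)\le g(0)$, i.e. $f(w+tv) \le f(w) - \sigma t$, which is exactly \eqref{condition 2}.

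Thus $\sigma$ is admissible in the supremum defining $|df|(u)$, so $|df|(u) \ge \sigma$. Letting $\sigma$ tend to $-\limsup_{w\to u}\overline{D}_+f(w)[v]$ yields the inequality. I expect the one-dimensional Dini-derivative monotonicity lemma to be the only non-routine ingredient; everything else is a direct verification of the definitions, with the choice $\delta_1/2$ being the technical device that lets a single $\delta$ handle both the openness of $X$ and the uniform validity of the Dini bound along every segment starting in $B_\delta(u)$.
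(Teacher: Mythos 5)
The paper does not prove this statement itself---it is quoted verbatim from \cite[Theorem~1.1.2]{nonsmooththeory1}---and your argument reproduces the standard proof of that result: take $\mathscr{H}(w,t)=w+tv$, which makes \eqref{condition 1} automatic, and obtain \eqref{condition 2} by integrating the pointwise bound $\overline{D}_+f(\cdot)[v]<-\sigma$ along segments via the classical lemma that a continuous function with negative upper Dini derivative is non-increasing. The proof is correct; the only point worth making explicit is that a segment starting at $w$ may pass through $u$ itself, where the bound coming from $\limsup_{w\to u}$ need not apply a priori, but the monotonicity lemma tolerates a single (indeed countable) exceptional set, so the conclusion $f(w+tv)\le f(w)-\sigma t$ is unaffected.
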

In particular, one can deduce that $|df|(u)=\|f'(u)\|$ for every differentiable function defined on a Banach space, see \cite[Corollary 1.1.3]{nonsmooththeory1}.

\begin{definition}\label{def p.critical}
Let $X$ be a metric space and let $f: X \to \mathds{R}$ be continuous. We will say that $u \in X$ is a (lower) critical point if $|df|(u) = 0$. A (lower) critical point is said at the level $c \in \mathbb{R}$ if it is also true that $f(u) = c$.
\end{definition}

\begin{definition}\label{ps}
Let $X$ be a metric space and let $f: X \to \mathds{R}$ be continuous. A sequence $\{u_n\} \subset X$ is said a $(PS)_c$-sequence if
\begin{align}
\label{ps1}
&f(u_n) \to c, \\
\label{ps2}
&|df|(u_n) \to 0.
\end{align}
Furthermore, we will say that $f$ satisfies the $(PS)_c$-condition if every  $(PS)_c$-sequence admits a convergent subsequence in $X$. 
If the $(PS)_c$-condition holds for every $c \in \mathds{R}$ we write $(PS)$-condition to simplify the notation.
\end{definition}
\begin{theorem}[Equivariant Mountain Pass, {\cite[Theorem 1.3.3]{nonsmooththeory1}}]\label{MPequi}	\hfill\\
Let $X$ be a Banach space and let $f:X\to\R$ be a continuous even functional. Suppose that
\begin{itemize}
\item $\exists\ \rho>0,\alpha>f(0)$ and a subspace $X_2\subset X$ of finite codimension such that $f\ge\alpha$ on $\partial B_{\rho}\cap X_2$,
\item for every subspace $X_1^{(k)}$ of dimension $k$, there exists $R=R^{(k)}>0$ such that $f\le f(0)$ in $B_R^c\cap X_1^{(k)}$.
\end{itemize}
If $f$ satisfies the $(PS)_c$-condition for every $c\ge\alpha$, then there exists  a divergent sequence of critical values.
\end{theorem}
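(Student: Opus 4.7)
The plan is to execute the classical Ambrosetti--Rabinowitz symmetric mountain pass scheme in the nonsmooth framework, with pseudo-gradient flows replaced by the odd abstract deformations afforded by the weak slope $|df|$ (as developed in \cite{nonsmooththeory1,corvellec1993deformation}). Let $m:=\operatorname{codim} X_2<+\infty$. For each integer $k\ge 1$ I would introduce the min-max values
\[
c_k := \inf_{A\in\Gamma_k}\sup_{u\in A} f(u),
\]
where $\Gamma_k$ is an appropriate class of compact symmetric subsets of $X\setminus\{0\}$ with Krasnoselskii genus $\gamma(A)\ge k$; in Rabinowitz fashion, the elements of $\Gamma_k$ can be realized as odd continuous images of boundaries of finite-dimensional balls, truncated outside $B_{R^{(n)}}$ thanks to the second hypothesis, so that every $A\in\Gamma_k$ is forced to meet $\partial B_\rho\cap X_2$.

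I would then verify $\alpha\le c_k<+\infty$ for every $k>m$. The upper bound is concrete: the finite-dimensional subspace $X_1^{(k+m)}$ together with the corresponding radius $R^{(k+m)}$ provides an explicit competitor in $\Gamma_k$ on which $f$ is bounded by its maximum on a compact set. The lower bound is a genus-intersection step: the property $\gamma(A)>m$ forces $A\cap X_2\ne\emptyset$, and combining this with the coercivity in finite-dimensional directions rules out $A\cap X_2\subset B_\rho^c$, so that $A\cap\partial B_\rho\cap X_2\ne\emptyset$ and the first hypothesis yields $\sup_A f\ge\alpha$. Each $c_k$ is then a critical value by contradiction: if $K_{c_k}:=\{u:f(u)=c_k,\ |df|(u)=0\}$ were empty, the $(PS)_{c_k}$-condition together with the nonsmooth equivariant deformation lemma would produce $\varepsilon>0$ and a continuous odd map $\eta:X\to X$ with $\eta(\{f\le c_k+\varepsilon\})\subset\{f\le c_k-\varepsilon\}$; choosing $A\in\Gamma_k$ with $\sup_A f<c_k+\varepsilon$, oddness gives $\eta(A)\in\Gamma_k$ and $\sup_{\eta(A)} f\le c_k-\varepsilon$, contradicting the definition of $c_k$. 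The standard multiplicity trick covers coincidences $c_k=\cdots=c_{k+j}$ (then $\gamma(K_{c_k})\ge j+1$, producing a whole symmetric continuum of critical points at that level), and divergence $c_k\to+\infty$ follows because a bounded subsequence $c_{k_n}\le M$ would, via $(PS)$ on $[\alpha,M]$, yield a compact critical set of finite genus in contradiction with the genus count built into the min-max.

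\textbf{The main obstacle} is precisely the nonsmooth equivariant deformation lemma invoked above. In the absence of a pseudo-gradient flow---unavailable here because $f$ is merely continuous---one must patch together the local odd deformations given by the very definition of $|df|$ (Definition~\ref{definition 1}) into a global odd continuous flow compatible with the $\mathbb{Z}_2$-action $u\mapsto -u$; this requires an equivariant partition of unity argument carried out directly on the weak-slope deformations, and it is exactly the content of the deformation machinery developed in \cite{nonsmooththeory1,corvellec1993deformation}. Once this tool is available, the rest of the proof is a faithful transposition of the smooth symmetric mountain pass into the weak-slope setting.
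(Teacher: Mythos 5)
The paper offers no proof of this statement at all — it is imported verbatim as Theorem 1.3.3 of the Canino–Degiovanni lecture notes \cite{nonsmooththeory1} — and your outline reconstructs essentially the argument given in that reference: a genus-based symmetric min–max scheme whose only genuinely nonsmooth ingredient is the equivariant deformation theorem for continuous functionals (Theorem 1.2.5 of the same notes), which you correctly identify as the crux. The one step you state too loosely is the lower bound $c_k\ge\alpha$: the intersection $A\cap\partial B_\rho\cap X_2\ne\emptyset$ is not obtained by ``ruling out $A\cap X_2\subset B_\rho^c$ via coercivity'' but by applying the genus/degree intersection lemma to $A\cap\overline{B_\rho}$ in the style of Rabinowitz's linking proposition; this is a standard repair and does not affect the overall architecture.
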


We  also recall the Linking Theorem proved by Bartolo, Benci and Fortunato in \cite{bartolo1983abstract}, but for continuous functionals:

\begin{theorem}\label{linking theorem}
    Let $X$ be a Banach space and assume that $f:X\to\R$ is an even continuous function such that $f(0)=0$.\\ Suppose also there exist closed subspaces $Y,Z\subset X$ and constants $\alpha, c_0,\rho$ such that $Y$ has finite codimension, $Z$ has finite dimension, $0<\alpha<c_0, \rho>0$ and 
    {\small
    \begin{align*}
        &\inf_{x\in Y\cap\partial B_\rho}f(x)>\alpha, &\max_{x\in Z} f(x)<c_0.
    \end{align*}}
    If $f$ satisfies the Palais-Smale condition at level $c\in(\alpha,c_0)$ and if $\dim Z-\text{codim } Y>0$, then $f$ has either $k$ critical values in $(\alpha,c_0)$ or it has infinitely many critical points in $f^{-1}(\alpha,c_0)$. 
\end{theorem}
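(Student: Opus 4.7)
The plan is to realize $e_\beta$ as the infimum of $\Jb$ over $\Nb$ and, following the idea of \cite{clapp2019simple}, to reduce this constrained minimization to one on the product $S^+\times S^+$, where $S^+:=\{\omega\in H_0^1(\Omega):\|\omega\|_{H_0^1}=1,\ \omega\ge 0\}$ is the non-negative part of the unit sphere. This reparametrization is essential since $\Nb$ is not a $C^1$--manifold in our nonsmooth setting, so the standard Lagrange multiplier approach is unavailable. A preliminary symmetrization step uses $A_i(x,-s)=A_i(x,s)$ to reduce to non-negative components: if $u\in\Nb$, then $(|u_1|,|u_2|)\in\Nb$ and $\Jb(|u_1|,|u_2|)=\Jb(u)$, since every term depends on $u_i$ only through $|u_i|$ (including the coupling $-\tfrac{2\beta}{p}\int|u_1|^{p/2}|u_2|^{p/2}$).

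The heart of the argument is the construction of the fibering map $\Phi:S^+\times S^+\to\Nb^+$. For each $(\omega_1,\omega_2)\in S^+\times S^+$ I look for a unique $(t_1,t_2)\in(0,\infty)^2$ with $(t_1\omega_1,t_2\omega_2)\in\Nb$; this amounts to solving the $2\times 2$ system $\psi_i(t_1,t_2)=0$ given by the two Nehari identities. Assumptions \eqref{a.0}--\eqref{a.2} ensure that the quasilinear contribution behaves like a positive $O(t_i^2)$ term for small $t_i$ and is dominated by $-t_i^p\int|\omega_i|^p$ for large $t_i$, while the competitive coupling $-\beta t_1^{p/2}t_2^{p/2}\int|\omega_1|^{p/2}|\omega_2|^{p/2}>0$ only adds to $\psi_i$, guaranteeing solvability for any positive directions. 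Assumption \eqref{a.3}, rewritten as strict decrease of $s\mapsto s^{3-p}D_sA_i(x,s)\xi\cdot\xi$, yields the strict monotonicity of $t_i\mapsto t_i^{1-p}\partial_{t_i}\psi_i(t_1,\cdot)$ needed for uniqueness of the fibering along each axis. These facts together give a unique continuous map $T(\omega_1,\omega_2)=(t_1,t_2)$, whence $\Phi$ is a homeomorphism and $\Jb\circ\Phi$ is continuous.

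After this reduction, $e_\beta=\inf_{S^+\times S^+}\Jb\circ\Phi$. The Nehari identity combined with \eqref{a.1}--\eqref{a.2} bounds $\Jb\big|_{\Nb}$ from below by a positive multiple of $\sum_i\|u_i\|_{H_0^1}^2$ provided $\lambda_i<\tfrac{p-2-\gamma}{p-2}\nu\mu_1$; hence $e_\beta>0$ and minimizing sequences are bounded in $H_0^1(\Omega)^2$. The subcritical growth of $g_\beta$ gives compactness of the coupling term, while the monotonicity encoded in \eqref{a.3} yields lower semicontinuity of the quasilinear part, so a weak limit $(\bar\omega_1,\bar\omega_2)$ exists and the scalings $t_i^n$ converge to some $\bar t_i>0$. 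The strict inequality $\beta<-1$ is used to rule out semi-trivial weak limits, by comparison with a test configuration whose energy lies strictly below $\min\{L_1,L_2\}$.

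The main obstacle will be the final step: turning the abstract minimizer $u^*$ into a weak solution of \eqref{Qb} in the sense of the definition recalled in the introduction. Since $\Nb$ carries no $C^1$--structure and $\Jb$ is only directionally differentiable along $H_0^1\cap L^\infty$, the standard Lagrange multiplier technique is unavailable. I would transport a deformation argument through $\Phi$ from the sphere $S^+\times S^+$ to $\Nb^+$ and then invoke Theorem \ref{theorem 1}, which bounds $|d\Jb|(u^*)$ from below by the one-sided directional derivatives along admissible directions, to conclude $|d\Jb|(u^*)=0$. The weak formulation then follows by the standard identification of weak-slope critical points with weak solutions for this class of quasilinear functionals, and non-negativity of $u^*$ is preserved from the preliminary symmetrization.
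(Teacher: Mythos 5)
Your proposal does not prove the statement in question. The statement is an \emph{abstract} critical point theorem: an equivariant linking theorem for even continuous functionals $f$ on a Banach space $X$, with hypotheses on a finite-codimensional subspace $Y$, a finite-dimensional subspace $Z$, the Palais--Smale condition, and a conclusion about $k$ critical values in $(\alpha,c_0)$ or infinitely many critical points in $f^{-1}(\alpha,c_0)$. What you have written instead is a proof sketch for Theorem~\ref{main thm competitive quasilinear} (existence of a least energy solution of \eqref{Qb} in the competitive regime $\beta<-1$): you discuss the Nehari set $\Nb$, the fibering map onto $\partial B_1\times\partial B_1$, coercivity under $\lambda_i<\frac{p-2-\gamma}{p-2}\nu\mu_1$, and the role of \eqref{a.3}. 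None of this engages with the actual claim: there is no construction of min--max levels, no use of the genus or of an index theory on the symmetric sets linking $Y\cap\partial B_\rho$ with $Z$, and no deformation argument producing the alternative ``$k$ critical values or infinitely many critical points''. The two results are not interchangeable; in the paper the linking theorem is used for the \emph{cooperative} case (Theorem~\ref{thm1.2}), while the Nehari-type reduction you describe is the separate machinery for the competitive case.

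The intended argument is short but different in kind: one repeats the classical proof of Bartolo, Benci and Fortunato \cite{bartolo1983abstract} (their Theorems 2.4 and 2.9), which builds the min--max values from classes of symmetric sets of prescribed index linking the sphere $Y\cap\partial B_\rho$ with $Z$, and establishes the multiplicity alternative via a deformation lemma. The only adaptation needed in the present nonsmooth setting is to replace the smooth (gradient-flow) deformation lemma by the Equivariant Deformation Theorem for continuous functionals of \cite{nonsmooththeory1}, which is available because $f$ is continuous, even, and satisfies the Palais--Smale condition (in the weak-slope sense of Definition~\ref{ps}) at the relevant levels. If you want to supply a genuine proof, that is the route to take; the material in your proposal belongs to the proof of Theorem~\ref{main thm competitive quasilinear}, not here.
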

\begin{proof}
The proof is the same obtained in the regular case in \cite[Theorem 2.9 and Theorem 2.4]{bartolo1983abstract}, but using the Equivariant Deformation Theorem  \cite[Theorem 1.2.5]{nonsmooththeory1}.
\end{proof}

Having recalled the general abstract tools and critical point
theorems of nonsmooth theory, we now develop the variational framework
associated with the energy functional $\Jb$. This framework will be used to study the
Palais--Smale compactness condition for our problem.

We set
\[Q_i(u_i):=-\text{div}(A_i(x,u_i)\nabla u_i)+\frac12 D_{u_i}A_i(x,u_i)\nabla u_i\cdot\nabla u_i,\quad i=1,2,\]
\[T_i(u_i,\varphi_i):=\int_\Omega A_i(x,u_i)\nabla u_i\cdot\nabla\varphi_i+\frac12\int_\Omega D_{u_i}A_i(x,u_i)\nabla u_i\cdot\nabla u_i\varphi_i,\ \ i=1,2,\]
\[T(u,\varphi):=T_1(u_1,\varphi_1)+T_2(u_2,\varphi_2),\]
for every $\varphi\in C_c^\infty(\Omega;\R^2)$. 

\begin{definition}
A sequence $\{u_n\}\subset H_0^1(\Omega;\R^2)$ is a Concrete Palais-Smale sequence at level $c$ for $\Jb$, $(CPS)_c$-sequence, if
\begin{itemize}
\item $\Jb(u_n)\to c$ in $\R$.
\item $Q_iu_{n,i}-\lambda_iu_{n,i}-g_{\beta,i}(u_n) \in H^{-1}(\Omega)$, eventually as $n\to+\infty$,  for every $i=1,2.$
\item $Q_iu_{n,i}-\lambda_iu_{n,i}-g_{\beta,i}(u_n)\to 0$ strongly in $H^{-1}(\Omega)$ for every $i=1,2.$
\end{itemize}
We will say that $\Jb$ satisfies the $(CPS)_c$-condition if every  $(CPS)_c$-sequence admits a convergent subsequence in $H_0^1(\Omega;\R^2)$.
\end{definition}
\begin{theorem}\label{differentiability Jb}
The functional $\Jb:H_0^1(\Omega;\R^2)\to\R$ is continuous and for every $u\in H_0^1(\Omega;\R^2)$ we have
\[
\resizebox{\linewidth}{!}{$\displaystyle
|d\Jb|(u)\ge\sup_{\substack{\varphi\in C_c^{\infty}(\Omega;\R^2)\\ \|\varphi\|_{H_0^1(\Omega;\R^2)}\le1}}\left(T(u,\varphi)-\int_\Omega(\lambda_1u_1\varphi_1+g_{\beta,1}(u)\varphi_1+\lambda_2u_2\varphi_2+g_{\beta,2}(u)\varphi_2)\right).
$}
\]
Furthermore, if  $|d\Jb|(u)<+\infty$, then $Q_iu_i-\lambda_iu_i-g_{\beta,i}(u)\in H^{-1}(\Omega)$ for every $i=1,2,$ and
\[
\resizebox{\linewidth}{!}{$\displaystyle
\|(Q_1u_1-\lambda_1u_1-g_{\beta,1}(u), Q_2u_2-\lambda_2u_2-g_{\beta,2}(u))\|_{H^{-1}(\Omega;\R^2)}\le|d\Jb|(u).
$}
\]
\end{theorem}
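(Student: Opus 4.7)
The plan is to (i) verify that $\Jb$ is continuous on $H_0^1(\Omega;\R^2)$, (ii) observe that at each $w\in H_0^1(\Omega;\R^2)$ the G\^ateaux derivative of $\Jb$ in every direction $\varphi\in C_c^\infty(\Omega;\R^2)$ exists and equals the linear form
\[
L_w(\varphi):=T(w,\varphi)-\sum_{i=1}^2\left(\lambda_i\int_\Omega w_i\varphi_i+\int_\Omega g_{\beta,i}(w)\varphi_i\right),
\]
(iii) show that $w\mapsto L_w(\varphi)$ is continuous from $H_0^1(\Omega;\R^2)$ to $\R$ for each fixed such $\varphi$, and (iv) combine Theorem~\ref{theorem 1} with a density argument to get both assertions.

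Step (i) is handled term by term. The quadratic $A$-terms are continuous on $H_0^1$ by the uniform bound in \eqref{a.0} and Vitali, the mass terms are continuous on $L^2$, and the $L^p$-terms (including $|u_1|^{p/2}|u_2|^{p/2}$, dealt with by H\"older) are continuous on $H_0^1$ via the subcritical embedding $H_0^1\hookrightarrow L^p$. Step (ii) is standard differentiation under the integral sign: since $\varphi\in C_c^\infty\subset L^\infty$ and $|D_sA_i|\le C_0$, the integrand in $\Jb(w+t\varphi)$ is dominated locally in $t$ by an $L^1$ function of $x$, so the derivative at $t=0$ can be computed pointwise and integrated, yielding exactly $L_w(\varphi)$ (here the compact support of $\varphi$ is essential, since otherwise the term $\int D_sA_i(x,w_i)\varphi_i|\nabla w_i|^2$ need not be finite).

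For step (iii), fix $\varphi\in C_c^\infty$ and $w_n\to u$ in $H_0^1(\Omega;\R^2)$. Strong convergence gives $\nabla w_{n,i}\to\nabla u_i$ in $L^2(\Omega)$ and hence $|\nabla w_{n,i}|^2\to|\nabla u_i|^2$ in $L^1(\Omega)$. Passing to a subsequence, $A_i(x,w_{n,i})$ and $D_sA_i(x,w_{n,i})$ converge a.e.\ while remaining uniformly bounded by \eqref{a.0}, so Vitali's convergence theorem applies to both integrals entering $T(w_n,\varphi)$ (with the product $|\nabla w_{n,i}|^2\varphi_i$ providing the equi-integrable majorant for the $D_sA_i$ term). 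The lower-order pieces $\lambda_iw_{n,i}\varphi_i$ and $g_{\beta,i}(w_n)\varphi_i$ converge in $L^1$ by subcritical growth and Sobolev embedding. Hence $L_{w_n}(\varphi)\to L_u(\varphi)$, and the usual subsequence trick upgrades this to convergence of the whole sequence.

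Finally, fix $\varphi\in C_c^\infty(\Omega;\R^2)$ with $\|\varphi\|_{H_0^1}=1$. Applying Theorem~\ref{theorem 1} to the direction $-\varphi$ together with the linearity of $L_w$ gives
\[
|d\Jb|(u)\ \ge\ -\limsup_{w\to u}\overline D_+\Jb(w)[-\varphi]\ =\ -\limsup_{w\to u}(-L_w(\varphi))\ =\ \liminf_{w\to u}L_w(\varphi)\ =\ L_u(\varphi),
\]
and scaling extends the inequality to the closed unit ball, producing the first assertion. When $|d\Jb|(u)<+\infty$, the same bound says that $L_u$ is a linear functional on $(C_c^\infty(\Omega;\R^2),\|\cdot\|_{H_0^1})$ of norm at most $|d\Jb|(u)$; by density of $C_c^\infty$ in $H_0^1(\Omega;\R^2)$ it extends uniquely to an element of $H^{-1}(\Omega;\R^2)$ identifiable with $(Q_iu_i-\lambda_iu_i-g_{\beta,i}(u))_{i=1,2}$, and the extended norm satisfies the claimed estimate. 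The main obstacle is step (iii), specifically the passage to the limit in $\int D_sA_i(x,w_{n,i})|\nabla w_{n,i}|^2\varphi_i$: strong $H_0^1$-convergence and the boundedness in \eqref{a.0} make Vitali the right tool, but the equi-integrability of the products has to be verified with some care.
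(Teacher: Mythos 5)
Your proposal is correct and follows essentially the same route as the paper: continuity of $\Jb$ by dominated convergence, Gâteaux differentiability along directions $\varphi\in C_c^\infty(\Omega;\R^2)$, the lower bound on the weak slope via Theorem~\ref{theorem 1} applied to $-\varphi$, and a density argument to identify $L_u$ with an element of $H^{-1}(\Omega;\R^2)$ of norm at most $|d\Jb|(u)$. The only difference is that you spell out the continuity of $w\mapsto L_w(\varphi)$ (via Vitali) and the extension step, which the paper leaves implicit.
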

\begin{proof}
The continuity of the functional follows from the fact that $A_i(x,u)$  are continuous in $u_i$ and by the Lebesgue's Theorem. 
Let $\varphi \in C_c^\infty(\Omega;\R^2)$ with $\|\varphi\|\le1$. Then, by the Lebesgue Theorem, the quantity
\begin{align*}
\langle \Jb'(u), \varphi \rangle 
&:= \lim_{t\to0} \frac{\Jb(u+t\varphi) - \Jb(u)}{t} \\
&= T_1(u_1, \varphi_1) + T_2(u_2, \varphi_2)\\
&\quad- \int_\Omega (\lambda_1 u_1 + g_{\beta,1}(u)) \varphi_1- \int_\Omega (\lambda_2 u_2 + g_{\beta,2}(u)) \varphi_2
\end{align*}
is well-defined, and the map
\[
H_0^1(\Omega;\R^2)\ni u\mapsto \langle \Jb'(u), \varphi \rangle 
\]
is continuous for every $\varphi\in C_c^\infty(\Omega;\R^2)$.

Moreover, from Theorem \ref{theorem 1}, we have
\[
\langle \Jb'(u),\varphi\rangle=\limsup_{\substack{w\to u\\\ t\to0^+}}\frac{\Jb(w+t\varphi)-\Jb(w)}{t}\ge-|d\Jb|(u)\cdot\|\varphi\|_{H_0^1},
\]
 and $|d\Jb|(u)\ge\langle \Jb'(u),-\varphi\rangle$. Since $\varphi$ is arbitrary, the conclusion follows.
\end{proof}
\begin{corollary}\label{PS e CPS}
The following facts hold:
\begin{enumerate}
\item[$(i)$] if $u$ is a lower critical point for $\Jb$, then $u$ is a weak solution of  \eqref{Qb},
\item[$(ii)$] every $(PS)_c$-sequence is also a $(CPS)_c$-sequence,
\item[$(iii)$] $\Jb$ satisfies $(CPS)_c$-condition$\implies \Jb$ satisfies $(PS)_c$-condition.
\end{enumerate}
\end{corollary}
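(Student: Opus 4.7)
The plan is to derive all three items as direct consequences of Theorem \ref{differentiability Jb}; no extra machinery is really needed beyond unpacking definitions.

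For $(i)$, I would start from $u$ with $|d\Jb|(u)=0$ and use the first bound in Theorem \ref{differentiability Jb}, namely
\[
0=|d\Jb|(u)\ge\sup_{\substack{\varphi\in C_c^\infty(\Omega;\R^2)\\ \|\varphi\|_{H_0^1}\le 1}}\left(T(u,\varphi)-\int_\Omega\sum_{i=1}^{2}\bigl(\lambda_i u_i+g_{\beta,i}(u)\bigr)\varphi_i\right).
\]
This shows that the bracketed quantity is nonpositive for every admissible $\varphi$. Replacing $\varphi$ by $-\varphi$ (which is still admissible) gives the reverse inequality, hence equality with zero on the unit ball of $C_c^\infty(\Omega;\R^2)$ in $H_0^1$. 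Both $T(u,\cdot)$ and the right-hand integral are linear in $\varphi$, so by rescaling the identity extends to every $\varphi\in C_c^\infty(\Omega;\R^2)$, which is exactly the weak-solution condition in the definition just after the matrix hypotheses.

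For $(ii)$, let $\{u_n\}$ be a $(PS)_c$-sequence, so $\Jb(u_n)\to c$ and $|d\Jb|(u_n)\to 0$. In particular $|d\Jb|(u_n)<+\infty$ for $n$ large, so the second conclusion of Theorem \ref{differentiability Jb} applies: for each $i$, $Q_iu_{n,i}-\lambda_iu_{n,i}-g_{\beta,i}(u_n)\in H^{-1}(\Omega)$, and the joint $H^{-1}$-norm is controlled by $|d\Jb|(u_n)\to 0$. This is exactly what it means for $\{u_n\}$ to be a $(CPS)_c$-sequence.

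For $(iii)$, suppose $\Jb$ satisfies the $(CPS)_c$-condition and let $\{u_n\}$ be a $(PS)_c$-sequence. By $(ii)$, $\{u_n\}$ is also a $(CPS)_c$-sequence, hence by assumption it admits a subsequence converging in $H_0^1(\Omega;\R^2)$, proving the $(PS)_c$-condition. The only mildly subtle point in the whole argument is the rescaling step in $(i)$, where one must notice that both sides are homogeneous of degree one in $\varphi$ so the unit-ball restriction in the formula of Theorem \ref{differentiability Jb} can be lifted for free; apart from that, the corollary is essentially a bookkeeping consequence of the previous theorem.
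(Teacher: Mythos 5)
Your argument is correct and is exactly the intended one: the paper states this as an immediate corollary of Theorem \ref{differentiability Jb}, and your unpacking (the $\pm\varphi$ symmetry plus homogeneity for $(i)$, the $H^{-1}$-norm bound for $(ii)$, and the chain of implications for $(iii)$) is precisely how it follows. No gaps.
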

We conclude this section with a useful density result and a remark about the regularity:

\begin{theorem}\label{density 2.3.2}
Let $w=(w_1,w_2)\in H^{-1}(\Omega;\R^2)$, $u=(u_1,u_2)\in H_0^1(\Omega;\R^2)$ such that
\[
\begin{cases}
    -\text{div}(A_1(x,u_1)\nabla u_1)+\frac12D_{u_1}A_1(x,u_1)\nabla u_1\cdot\nabla u_1=w_1&\text{in $\mathcal D'(\Omega)$},\\
   -\text{div}(A_2(x,u_2)\nabla u_2)+\frac12D_{u_2}A_2(x,u_2)\nabla u_2\cdot\nabla u_2=w_2&\text{in $\mathcal D'(\Omega)$},
\end{cases}
\]
and let $v=(v_1,v_2)\in H_0^1(\Omega;\R^2)$ such that \[\left[\left(D_{u_i}A_i(x,u_i)\nabla u_i\cdot\nabla u_i\right)v_i\right]^-\in L^1(\Omega),\quad i=1,2.\]

Then 
\[\left(D_{u_1}A_1(x,u_1)\nabla u_1\cdot\nabla u_1\right)v_1, \left(D_{u_2}A_2(x,u_2)\nabla u_2\cdot\nabla u_2\right)v_2\in L^1(\Omega),\]
and
\[T_1(u_1,v_1)=\langle w_1,v_1\rangle,\quad T_2(u_2,v_2)=\langle w_2,v_2\rangle.\]
\end{theorem}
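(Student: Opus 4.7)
The plan is to extend the distributional identity from $C_c^\infty$ test functions to the given $v$ by a two-step approximation. A crucial preliminary observation is that, by hypothesis \eqref{a.0}, $|D_{u_i}A_i(x,u_i)\nabla u_i\cdot\nabla u_i|\le C_0|\nabla u_i|^2$ a.e., so the function $\Phi_i:=D_{u_i}A_i(x,u_i)\nabla u_i\cdot\nabla u_i$ belongs to $L^1(\Omega)$. This makes the middle integrand $\Phi_i\zeta$ automatically integrable whenever $\zeta\in L^\infty(\Omega)$, which is what lets the first step run cleanly; the sign hypothesis on $[\Phi_i v_i]^-$ will only enter in the second step.

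First I would prove the identity for $v_i\in H_0^1(\Omega)\cap L^\infty(\Omega)$. Start from a sequence $v_n\in C_c^\infty(\Omega)$ with $v_n\to v_i$ in $H_0^1(\Omega)$, truncate at level $M:=\|v_i\|_\infty$ (which keeps convergence in $H_0^1$ since $T_M$ is $1$-Lipschitz on $H_0^1$), and then mollify inside the compact supports to obtain $\varphi_n\in C_c^\infty(\Omega)$ with $\varphi_n\to v_i$ in $H_0^1(\Omega)$, $\varphi_n\to v_i$ a.e.\ and $\|\varphi_n\|_\infty\le M$. Testing the distributional equation against $\varphi_n$ gives
\[
\int_\Omega A_i(x,u_i)\nabla u_i\cdot\nabla\varphi_n+\frac12\int_\Omega\Phi_i\varphi_n=\langle w_i,\varphi_n\rangle.
\]
The first and third terms converge to their counterparts for $v_i$ since $A_i(x,u_i)\nabla u_i\in L^2(\Omega)$ and $w_i\in H^{-1}(\Omega)$; the middle term passes to the limit by dominated convergence with dominant $M|\Phi_i|\in L^1(\Omega)$. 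This yields $T_i(u_i,v_i)=\langle w_i,v_i\rangle$ on $H_0^1(\Omega)\cap L^\infty(\Omega)$.

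Next, to reach a general $v_i\in H_0^1(\Omega)$ satisfying $[\Phi_i v_i]^-\in L^1(\Omega)$, I would apply the previous step to the truncations $v_i^n:=\max(-n,\min(n,v_i))\in H_0^1(\Omega)\cap L^\infty(\Omega)$. Because $v_i^n$ has the same sign as $v_i$ and $|v_i^n|\nearrow|v_i|$, both $[\Phi_i v_i^n]^+$ and $[\Phi_i v_i^n]^-$ are monotone increasing in $n$ with a.e. limits $[\Phi_i v_i]^\pm$. Monotone convergence applied to the negative part is justified by the standing hypothesis. Meanwhile, $v_i^n\to v_i$ in $H_0^1(\Omega)$, hence $\int_\Omega A_i(x,u_i)\nabla u_i\cdot\nabla v_i^n$ and $\langle w_i,v_i^n\rangle$ converge to finite limits; rearranging the identity shows that $\int_\Omega[\Phi_i v_i^n]^+$ is a bounded sequence, so a second monotone convergence forces $[\Phi_i v_i]^+\in L^1(\Omega)$. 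Subtracting the two limits yields $\Phi_i v_i\in L^1(\Omega)$ and, by passage to the limit in the identity for $v_i^n$, the desired formula $T_i(u_i,v_i)=\langle w_i,v_i\rangle$.

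The only point requiring care is the density claim invoked in the first step, namely the approximation of a bounded element of $H_0^1(\Omega)$ by smooth compactly supported functions while retaining a uniform $L^\infty$ bound; this rests on the standard fact that convolution with a nonnegative kernel of unit mass never increases the $L^\infty$ norm, combined with truncation of a $C_c^\infty$-approximating sequence. Once that is in place, the remainder of the argument is an exercise in Lebesgue and monotone convergence, organised around the monotonicity of $n\mapsto|T_n(v_i)|$ and the a.e.\ sign of $\Phi_i v_i^n$ inherited from $v_i$.
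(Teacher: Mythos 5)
Your argument is correct, but it takes a different route from the paper only in the sense that the paper gives no argument at all: its proof of Theorem \ref{density 2.3.2} is the single line ``apply \cite[Theorem 3.1]{caninoserdica} for each $i=1,2$''. What you have written is essentially a self-contained reconstruction of that cited result, specialised to the present hypotheses, and the two-step structure (first bounded test functions by density and dominated convergence, then general $v_i$ by truncation at level $n$ and monotone convergence on $[\Phi_i v_i^n]^{\pm}$, using the finiteness of $\int[\Phi_i v_i]^-$ to bound $\int[\Phi_i v_i^n]^+$ via the identity) is exactly the standard mechanism behind such results. Your preliminary observation is worth highlighting: under \eqref{a.0} the quadratic term $\Phi_i=D_{u_i}A_i(x,u_i)\nabla u_i\cdot\nabla u_i$ is automatically in $L^1(\Omega)$ with $|\Phi_i|\le C_0N|\nabla u_i|^2$, which makes your Step 1 genuinely easier than the general theorem being cited (where only the pairing against bounded functions is controlled); the obstruction addressed by the theorem is solely the integrability of $\Phi_i v_i$ for unbounded $v_i$, as the remark after the definition of weak solution points out. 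Two small imprecisions, neither fatal: the truncation $T_M$ is not literally $1$-Lipschitz on $H_0^1(\Omega)$ (that would assert $\|T_Mu-T_Mv\|_{H^1}\le\|u-v\|_{H^1}$, which fails); what you need and what is true is that $T_M$ is continuous for the strong $H^1$ topology together with $\|T_Mu\|_{H^1}\le\|u\|_{H^1}$, so $T_M(v_n)\to T_M(v_i)=v_i$ in $H_0^1(\Omega)$. Also, in your Step 2 the sentence ``Subtracting the two limits'' should be read as asserting $\Phi_iv_i=[\Phi_iv_i]^+-[\Phi_iv_i]^-\in L^1(\Omega)$ once both parts are integrable, after which the passage to the limit in $\int\Phi_iv_i^n$ is again monotone (or dominated, with dominant $|\Phi_iv_i|$) convergence.
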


\begin{proof}
One can apply \cite[Theorem 3.1]{caninoserdica} for every $i=1,2$.
\end{proof}
\begin{remark}\label{regularity}
 From \cite[Theorem 4.1]{caninoserdica} and the subcritical growth of \( G_\beta \), see \eqref{g.1}, it follows that any weak solution \( u \in H_0^1(\Omega; \mathbb{R}^2) \) of \eqref{Qb} is bounded, i.e., \( u_i \in H_0^1(\Omega) \cap L^\infty(\Omega) \) for each \( i = 1,2 \).  
Moreover, by Theorem \ref{density 2.3.2}, we can test \eqref{Qb} with any bounded function \( v \in H_0^1(\Omega; \mathbb{R}^2) \cap L^\infty(\Omega; \mathbb{R}^2) \), as well as with any weak solution itself.
\end{remark}

\section{The Palais-Smale condition}

\begin{lemma}\label{PS1}
Let $c\in\R$ be a real number and $\lambda_1,\lambda_2<\frac{p-2-\gamma}{p-2}\nu\mu_1,\beta\in\R$.  Then every $(CPS)_c$-sequence for $\Jb$ is bounded in $H_0^1(\Omega;\R^2)$.
\end{lemma}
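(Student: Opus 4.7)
The plan is to test the Concrete Palais--Smale relation with $u_n$ itself and then combine the resulting identity with the energy bound $\Jb(u_n)=c+o(1)$ to obtain a coercive estimate on $\|u_n\|_{H^1_0}$. Plugging $u_n$ directly into the weak formulation is not automatic, since the quadratic gradient term $(D_{u_i}A_i(x,u_{n,i})\nabla u_{n,i}\cdot\nabla u_{n,i})\,u_{n,i}$ need not a priori belong to $L^1(\Omega)$. However, hypothesis \eqref{a.2} makes this integrand pointwise nonnegative, so its negative part is trivially in $L^1(\Omega)$ and Theorem~\ref{density 2.3.2} applies with $v_i=u_{n,i}$. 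Denoting by $w_n\in H^{-1}(\Omega;\R^2)$ the residuals (so $\|w_n\|_{H^{-1}}\to 0$) and noting that $g_{\beta,1}(u)u_1+g_{\beta,2}(u)u_2=p\,G_\beta(u)$, this yields
\[
\sum_{i=1}^2 T_i(u_{n,i},u_{n,i})-\sum_{i=1}^2\lambda_i\|u_{n,i}\|_{L^2}^2-p\int_\Omega G_\beta(u_n)=\langle w_n,u_n\rangle.
\]

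Next, I would eliminate $\int_\Omega G_\beta(u_n)$ between this identity and the definition of $\Jb(u_n)$; the nonlinear contribution cancels and leaves
\[
\Jb(u_n)-\tfrac{1}{p}\langle w_n,u_n\rangle=\sum_{i=1}^2\Bigl[\tfrac{1}{2}\!\int_\Omega\!A_i(x,u_{n,i})|\nabla u_{n,i}|^2-\tfrac{1}{p}T_i(u_{n,i},u_{n,i})\Bigr]-\tfrac{p-2}{2p}\sum_{i=1}^2\lambda_i\|u_{n,i}\|_{L^2}^2.
\]
The upper bound from \eqref{a.2} gives $T_i(u_{n,i},u_{n,i})\le\tfrac{2+\gamma}{2}\int_\Omega A_i|\nabla u_{n,i}|^2$, so each bracket is at least $\tfrac{p-2-\gamma}{2p}\int_\Omega A_i|\nabla u_{n,i}|^2$, which by \eqref{a.1} is in turn at least $\tfrac{(p-2-\gamma)\nu}{2p}\|\nabla u_{n,i}\|_{L^2}^2$. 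Since $\gamma<p-2$, this coefficient is strictly positive.

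Combining this lower bound with the elementary estimate $|\langle w_n,u_n\rangle|\le\|w_n\|_{H^{-1}}\|u_n\|_{H^1_0}=o(\|u_n\|_{H^1_0})$ and Poincar\'e's inequality $\|u_{n,i}\|_{L^2}^2\le\mu_1^{-1}\|\nabla u_{n,i}\|_{L^2}^2$ on the positive part of $\lambda_i$ leads to
\[
\sum_{i=1}^2\frac{(p-2-\gamma)\nu\mu_1-\lambda_i^+(p-2)}{2p\,\mu_1}\,\|\nabla u_{n,i}\|_{L^2}^2\le c+o(1)+o(\|u_n\|_{H^1_0}),
\]
so that under the standing range $\lambda_i<\tfrac{p-2-\gamma}{p-2}\nu\mu_1$ of the main theorems each left-hand coefficient is strictly positive, forcing $\{u_n\}$ to be bounded in $H^1_0(\Omega;\R^2)$.

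The main obstacle is the initial step: one cannot naively insert $u_n$ in the weak formulation because of the non-integrability of the quadratic gradient term, and it is exactly the sign statement in \eqref{a.2} that unlocks Theorem~\ref{density 2.3.2}. Once past this point, the remainder is a quasilinear analogue of the classical Ambrosetti--Rabinowitz estimate, with the parameter $\gamma\in(0,p-2)$ from \eqref{a.2} quantifying how far $T_i$ can deviate from the pure energy term $\int_\Omega A_i|\nabla u_i|^2$.
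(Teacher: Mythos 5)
Your strategy is the same as the paper's: use the sign condition in \eqref{a.2} to make Theorem \ref{density 2.3.2} applicable with $v=u_n$, form the Ambrosetti--Rabinowitz combination $\Jb(u_n)-\tfrac1p\langle w_n,u_n\rangle$ so that the $p$-homogeneous terms (including the coupling term) cancel via $\nabla G_\beta(t)\cdot t=pG_\beta(t)$, and use the upper bound $\int_\Omega (D_{u_i}A_i\nabla u_{n,i}\cdot\nabla u_{n,i})u_{n,i}\le\gamma\int_\Omega A_i\nabla u_{n,i}\cdot\nabla u_{n,i}$ together with \eqref{a.1} to extract the coefficient $\tfrac{(p-2-\gamma)\nu}{2p}$. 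All of these steps are correct and match the paper.

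The gap is in the final step. The lemma asserts boundedness for \emph{every} $\lambda_1,\lambda_2\in\R$, but your treatment of the positive part of $\lambda_i$ by Poincar\'e produces the coefficient $\tfrac{(p-2-\gamma)\nu\mu_1-\lambda_i^+(p-2)}{2p\mu_1}$, which is positive only under the restriction $\lambda_i<\tfrac{p-2-\gamma}{p-2}\nu\mu_1$; you explicitly invoke ``the standing range of the main theorems'', so you prove a strictly weaker statement than the one claimed. The paper instead estimates $\lambda_i\int_\Omega u_{n,i}^2$ by a weighted Young inequality, $\int_\Omega u_{n,i}^2\le C_1(\varepsilon)+C_2(\varepsilon)\int_\Omega|u_{n,i}|^p$ with $C_2(\varepsilon)\to0$, precisely so that no upper bound on $\lambda_i$ is needed. (One should note that after the exact cancellation of the $p$-terms in $\Jb-\tfrac1p\langle w_n,u_n\rangle$ there is no leftover $\int_\Omega|u_{n,i}|^p$ to absorb $C_2(\varepsilon)\int_\Omega|u_{n,i}|^p$, so the paper's absorption implicitly requires a combination $\Jb-\theta\langle w_n,u_n\rangle$ with $\theta>\tfrac1p$ retaining a positive multiple of the $p$-terms; still, that is the mechanism that removes the hypothesis on $\lambda$, and it is the one your argument lacks.) If only the parameter range of Theorems \ref{thm1.2} and \ref{main thm competitive quasilinear} is at stake your proof suffices, but as a proof of the lemma as stated it is incomplete.
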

\begin{proof}
Let $\{u_n\}\subset H_0^1(\Omega;\R^2)$ be a $(CPS)_c$-subsequence for $\Jb$ and let  \[
w_{n,i}:=Q_iu_{n,i}-\lambda_iu_{n,i}-g_{\beta,i}(u_n),\]
for every $i=1,2$.
By \eqref{a.2} and Theorem \ref{density 2.3.2}, we have:
\begin{align*}
&-\|w_{n,i}\|_{H^{-1}}\cdot\|u_{n,i}\|_{H_0^1}\le\langle w_{n,i},u_{n,i}\rangle=\int_{\Omega}A_i(x,u_{n,i})\nabla u_{n,i}\cdot\nabla u_{n,i}\\
&+\frac12\int_{\Omega}\left(D_{u_{n,i}}A_i(x,u_{n,i})\nabla u_{n,i}\cdot\nabla u_{n,i}\right)u_{n,i}-\lambda_i\int_{\Omega}u_{n,i}^2-\int_{\Omega}g_{\beta,i}(u_n)u_{n,i}.
\end{align*}
According to \eqref{a.2}:
\begin{align*}
\int_{\Omega}\left(D_{u_{n,i}}A_i(x,u_{n,i})\nabla u_{n,i}\cdot\nabla u_{n,i}\right)u_{n,i}\le\gamma\int_{\Omega}A_i(x,u_{n,i})\nabla u_{n,i}\cdot\nabla u_{n,i}.
\end{align*}
%
Thus, setting $w_n=(w_{n,1},w_{n,2})\in H^{-1}(\Omega;\R^2),$ we get
\begin{equation}\label{limitatezza PS}
\begin{aligned}
    C+1+\|u_n\|&\ge \Jb(u_n)-\frac{1}{p}\langle w_n,u_n\rangle\\
    &=\frac{p-2}{2p}\sum_{i=1}^2
    \int_{\Omega}A_i(x,u_{n,i})\nabla u_{n,i}\cdot\nabla u_{n,i}\\
    &\quad-\frac{1}{2p}\int_{\Omega}\left(D_{u_{n,1}}A_1(x,u_{n,1})\nabla u_{n,1}\cdot\nabla u_{n,1}\right) u_{n,1}\\
    &\quad-\frac{1}{2p}\int_{\Omega}\left(D_{u_{n,2}}A_2(x,u_{n,2})\nabla u_{n,2}\cdot\nabla u_{n,2}\right)u_{n,2}\\
    &\quad-\frac{p-2}{2p}\left(\lambda_1\int_{\Omega}u_{n,1}^2+\lambda_2\int_{\Omega}u_{n,2}^2\right)\\
    &\ge\frac{p-2-\gamma}{2p}\sum_{i=1}^2\biggl(\int_{\Omega}A_i(x,u_{n,i})\nabla u_{n,i}\cdot\nabla u_{n,i}\biggr)\\
    &\quad-\frac{p-2}{2p}\left(\lambda_1\int_{\Omega}u_{n,1}^2+\lambda_2\int_{\Omega}u_{n,2}^2\right).
\end{aligned}
\end{equation}
According to \eqref{a.1}, we have
\[
\begin{aligned}
C+1+\|u_n\|\ge\sum_{i=1}^2\bigg[&
\frac{p-2-\gamma}{2p}\nu\int_\Omega|\nabla u_{n,i}|^2-\frac{p-2}{2p}\lambda_i\int_\Omega u_{n,i}^2\bigg].
\end{aligned}
\]
If $\lambda_1,\lambda_2<\frac{p-2-\gamma}{p-2}\nu\mu_1$ we have there exists a constant $C_1>0$ such that
\begin{align*}
    C+1+\|u_n\|\ge C_1(\|u_{n,1}\|_{H_0^1}^2+\|u_{n,2}\|_{H_0^1}^2)
\end{align*}
and this implies that $\{u_n\}$ is bounded in $H_0^1(\Omega;\R^2)$. \end{proof}
\begin{lemma}\label{remark PS}
Let $c\in\R$, $\beta\ge0$, and $\lambda_1,\lambda_2\in\R$. Then every $(CPS)_c$-sequence for $\Jb$ is bounded in $H_0^1(\Omega;\R^2)$.
\end{lemma}
\begin{proof}
From \eqref{limitatezza PS}, we have
\[
\begin{aligned}
\frac{p-2-\gamma}{2p}\sum_{i=1}^2
\int_{\Omega}A_i(x,u_{n,i})\nabla u_{n,i}\cdot\nabla u_{n,i}
&\le\frac{p-2}{2p}\sum_{i=1}^2\lambda_i\int_\Omega u_{n,i}^2+C(1+\|u_n\|).
\end{aligned}
\]
Suppose that $\|u_n\|\to+\infty$ as $n\to+\infty$. Thus,
\[
C+1+\|u_n\|\le\frac{p-2-\gamma}{4p}\nu(\|u_{n,1}\|^2+\|u_{n,2}\|^2),
\]
and there exists $c_1>0$ such that
\[
\|u_{n,1}\|^2+\|u_{n,2}\|^2\le c_1\sum_{i=1}^2\max\{\lambda_i,0\}\int_\Omega u_{n,i}^2.
\]
If $\beta\ge0$, then we obtain 
\[
\resizebox{\linewidth}{!}{$\displaystyle
\frac{\Jb(u_{n,1},u_{n,2})}{\|u_{n,1}\|^2+\|u_{n,2}\|^2}=\frac12\frac{\displaystyle\sum_{i=1}^2\biggl(\int_{\Omega}A_i(x,u_{n,i})\nabla u_{n,i}\cdot\nabla u_{n,i}-\lambda_i\int_\Omega u_{n,i}^2\biggr)}{\|u_{n,1}\|^2+\|u_{n,2}\|^2}-\frac{\displaystyle\int_\Omega G_\beta(u_{n,1},u_{n,2})}{\|u_{n,1}\|^2+\|u_{n,2}\|^2},
$}
\]
and the right-hand side goes to $-\infty$ when $n\to+\infty$, which is a contradiction. Notice that the case $\beta=0$ is included, since
\[
G_0(u_1,u_2)=\frac1p(|u_1|^p+|u_2|^p)\ge0.
\]
Thus, if $\beta\ge0$ and $\lambda_1,\lambda_2\in\R$, then every $(CPS)_c$-sequence for $\Jb$ is bounded in $H_0^1(\Omega;\R^2)$.
\end{proof}

\begin{theorem}\label{PS and boundeness}
For every $c\in\R$ and for any $\lambda_1,\lambda_2,\beta\in\R$ the following facts are equivalent:
\begin{enumerate}
\item[$(i)$] $\Jb$ satisfies the $(CPS)_c$-condition.
\item[$(ii)$] Every $(CPS)_c$-sequence for $\Jb$ is bounded in $H_0^1(\Omega;\R^2)$.
\end{enumerate}
\end{theorem}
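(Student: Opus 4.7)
The plan is to establish the two implications separately. The direction $(i)\Rightarrow(ii)$ is essentially immediate: if some $(CPS)_c$-sequence $\{u_n\}$ were unbounded in $H_0^1(\Omega;\R^2)$, a subsequence with $\|u_n\|_{H_0^1}\to+\infty$ would still be a $(CPS)_c$-sequence and, by $(i)$, would admit a strongly convergent sub-subsequence, which is bounded, a contradiction.

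For the substantive direction $(ii)\Rightarrow(i)$, fix a $(CPS)_c$-sequence $\{u_n\}$ and set $w_{n,i}:=Q_iu_{n,i}-\lambda_iu_{n,i}-g_{\beta,i}(u_n)$, so that $\|w_{n,i}\|_{H^{-1}}\to 0$. By $(ii)$ the sequence is bounded in $H_0^1(\Omega;\R^2)$; hence, up to a subsequence, $u_n\wto u$ weakly in $H_0^1(\Omega;\R^2)$, strongly in $L^q(\Omega;\R^2)$ for every $q\in[1,2^*)$, and almost everywhere in $\Omega$. From the subcritical growth \eqref{g.1} we deduce that $g_\beta(u_n)\to g_\beta(u)$ strongly in $L^{p/(p-1)}(\Omega;\R^2)\hookrightarrow H^{-1}(\Omega;\R^2)$, and consequently $Q_iu_{n,i}-\lambda_iu_{n,i}\to g_{\beta,i}(u)$ in $H^{-1}(\Omega)$ for each $i=1,2$.

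It remains to upgrade the weak convergence of $\nabla u_n$ in $L^2$ to strong convergence. Since $\Jb$ is only differentiable along bounded directions, the relation $Q_iu_{n,i}-\lambda_iu_{n,i}-g_{\beta,i}(u_n)=w_{n,i}$ cannot be tested directly with $u_{n,i}-u_i$; the natural device is the truncation $T_k(u_{n,i}-u_i)$, with $T_k(s):=\max\{-k,\min\{k,s\}\}$, which lies in $H_0^1(\Omega)\cap L^\infty(\Omega)$. The negative part of $\bigl(D_{u_i}A_i(x,u_{n,i})\nabla u_{n,i}\cdot\nabla u_{n,i}\bigr)T_k(u_{n,i}-u_i)$ is dominated by $k\gamma A_i(x,u_{n,i})|\nabla u_{n,i}|^2\in L^1(\Omega)$ by \eqref{a.2}, so Theorem \ref{density 2.3.2} permits the test. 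The $\lambda_i$- and $g_{\beta,i}$-contributions vanish by strong $L^q$-convergence and the $L^\infty$-bound on $T_k$, and a standard monotonicity / almost-everywhere gradient argument (as in \cite{caninoquasilineare1,caninoserdica}) yields $\nabla u_{n,i}\to\nabla u_i$ a.e. together with
\[
\lim_{n\to+\infty}\int_{\{|u_{n,i}-u_i|\le k\}}A_i(x,u_{n,i})\,|\nabla(u_{n,i}-u_i)|^2=0\qquad\text{for every }k>0;
\]
a diagonal extraction in $k\to+\infty$, combined with ellipticity \eqref{a.1}, promotes this to $u_n\to u$ strongly in $H_0^1(\Omega;\R^2)$. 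The main obstacle is exactly this last step: the merely $L^1$ quadratic correction $D_{u_i}A_i(x,u_{n,i})\nabla u_{n,i}\cdot\nabla u_{n,i}$ obstructs a naive test with $u_n-u$, so one must invoke the truncation, extract from \eqref{a.2} the one-sided sign that makes Theorem \ref{density 2.3.2} applicable, and close the estimates carefully.
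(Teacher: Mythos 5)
Your argument is correct and follows essentially the same route as the paper: the same subsequence argument for $(i)\Rightarrow(ii)$, and for $(ii)\Rightarrow(i)$ the same extraction of a weak limit and passage to the limit in the lower-order terms, after which the paper simply cites \cite[Lemma~3.4]{caninoserdica} for the strong $H_0^1$-convergence exactly where you sketch the underlying truncation/monotonicity argument. (One cosmetic slip: the $L^1$-domination needed to apply Theorem~\ref{density 2.3.2} with the test function $T_k(u_{n,i}-u_i)$ follows directly from the uniform bound on $D_sA_i$ in \eqref{a.0}, rather than from \eqref{a.2} as you wrote.)
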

\begin{proof}
$(i)\implies(ii)$. Let $\{u_n\}$ be a $(CPS)_c$-sequence for $\Jb$. Since $\Jb$ satisfies the $(CPS)_c$-condition, we have $u_n\to u\in H_0^1(\Omega;\R^2)$ up to a subsequence. In particular, $\{u_n\}$ is bounded in $H_0^1(\Omega;\R^2)$, namely $(ii)$.\\ 
$(ii)\implies(i)$. Let $\{u_n\}$ be a bounded $(CPS)_c$-sequence for $\Jb$. Since $\{u_n\}$ is bounded, $u_n\wto u$ in $H_0^1(\Omega;\R^2)$. Furthermore,
\[\lambda_i\int_\Omega u_{n,i}\varphi_i\to\lambda_i\int_\Omega u_i\varphi_i,\quad \int_\Omega g_{\beta,i}(u_n)\varphi_i\to\int_\Omega g_{\beta,i}(u)\varphi_i\]
for all $\varphi=(\varphi_1,\varphi_2)\in H_0^1(\Omega;\R^2)$ and for every $i=1,2$, by the Sobolev embedding. Clearly,
$$w_{n,i}:=Q_iu_{n,i}-\lambda_iu_{n,i}-g_{\beta,i}(u_n)\to0$$
in $H^{-1}(\Omega)$ for every $i=1,2$. Moreover,
\[
Q_i u_{n,i}=\lambda_i u_{n,i}+g_{\beta,i}(u_n)+w_{n,i},
\]
and the right-hand side converges in $H^{-1}(\Omega)$. Hence, by \cite[Lemma 3.4]{caninoserdica}, $u_n\to u$ strongly in $H_0^1(\Omega;\mathbb R^2)$ up to a subsequence.
\end{proof}
Since we have already proven that any $(CPS)_c$-sequence  is bounded in Lemma \ref{PS1}, we obtain the following:
\begin{corollary}\label{PS condition}
    The functional $\Jb$ satisfies the $(PS)_c-$condition, for every $\lambda_1,\lambda_2\in\R$ and $\beta\ge0$.
\end{corollary}
\begin{proof}
    Let \( \{u_n\} \) be a \((PS)_c\)-sequence. Then \( \{u_n\} \) is also a \((CPS)_c\)-sequence by Corollary~\ref{PS e CPS}-$(ii)$. By Lemma~\ref{PS1}, Lemma \ref{remark PS} and Theorem~\ref{PS and boundeness}, it follows that \( \Jb \) satisfies the \((CPS)_c\)-condition. The claim then follows from Corollary~\ref{PS e CPS}-$(iii)$.
\end{proof}
\section{The scalar problem}\label{section 4}
We consider the scalar equation \eqref{Q_i} for every $i=1,2$ and we define the associated energy functional 
\[\I(u_i)=\frac12\int_\Omega A_i(x,u_i)\nabla u_i\cdot\nabla u_i-\frac{\lambda_i}{2}\int_\Omega u_i^2-\frac{1}{p}\int_\Omega |u_i|^p.\]
\begin{lemma}
 Let $i=1,2$. We have
    \begin{itemize}
        \item[$(i)$] There exists a subspace $X\subseteq H_0^1(\Omega)$ with finite codimension and $\rho,\alpha>0$ such that $\I\ge\alpha$ on $\partial B_\rho\cap X$.
        \item[$(ii)$] For every subspace $V\subset H_0^1(\Omega)$ with finite dimension, there exists $R>0$ such that $\I\le0$ in $B_R^c\cap V$.
    \end{itemize}
\end{lemma}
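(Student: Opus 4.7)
The plan is to set up the standard mountain--pass geometry for the scalar functional $\I$, using the two--sided bounds on $A_i$ coming from \eqref{a.0}--\eqref{a.1}, together with Poincaré's inequality, the Sobolev embedding $H_0^1(\Omega)\hookrightarrow L^p(\Omega)$ (recall $p<2^*$), and the equivalence of norms on finite--dimensional subspaces. The key point that makes (i) cheap is that under the standing assumption $\lambda_i<\frac{p-2-\gamma}{p-2}\nu\mu_1<\nu\mu_1$, the quadratic part of $\I$ is already positive definite, so no spectral decomposition is needed and one can simply take $X=H_0^1(\Omega)$, viewed as a (finite--, in fact zero--codimension) subspace.

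For (i), I would use \eqref{a.1} to bound
\[
\frac12\int_\Omega A_i(x,u_i)\nabla u_i\cdot\nabla u_i\ge\frac{\nu}{2}\|u_i\|_{H_0^1}^2,
\]
then combine Poincaré $\int_\Omega u_i^2\le\mu_1^{-1}\|u_i\|_{H_0^1}^2$ with the Sobolev embedding $\|u_i\|_{L^p}\le C\|u_i\|_{H_0^1}$ to obtain
\[
\I(u_i)\ge\frac12\left(\nu-\frac{\lambda_i^+}{\mu_1}\right)\|u_i\|_{H_0^1}^2-\frac{C^p}{p}\|u_i\|_{H_0^1}^p.
\]
The assumption $\lambda_i<\nu\mu_1$ makes the quadratic coefficient strictly positive, so the right--hand side has a strictly positive maximum at some $\rho>0$; choosing $\alpha$ to be half of this maximum yields $\I\ge\alpha$ on $\partial B_\rho\cap X$.

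For (ii), I would use \eqref{a.0} to bound the matrix coefficients uniformly, getting
\[
\frac12\int_\Omega A_i(x,u_i)\nabla u_i\cdot\nabla u_i\le C'\|u_i\|_{H_0^1}^2,
\]
and then invoke the equivalence of norms on the finite--dimensional space $V$: there is $c_V>0$ with $\|u_i\|_{L^p}^p\ge c_V\|u_i\|_{H_0^1}^p$. Combining these with a crude bound on $\lambda_i\int_\Omega u_i^2$ via Poincaré gives
\[
\I(u_i)\le C''\|u_i\|_{H_0^1}^2-\frac{c_V}{p}\|u_i\|_{H_0^1}^p,\qquad u_i\in V,
\]
for some constant $C''$ independent of $u_i$. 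Since $p>2$, this quantity tends to $-\infty$ as $\|u_i\|_{H_0^1}\to\infty$ along $V$, so choosing $R=R(V)$ large enough ensures $\I\le0$ on $B_R^c\cap V$.

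No step is really an obstacle; the only subtlety is checking in (i) that the strict bound $\lambda_i<\nu\mu_1$ is in fact available, which is immediate because $\frac{p-2-\gamma}{p-2}<1$ thanks to $\gamma\in(0,p-2)$. The remainder is routine mountain--pass bookkeeping.
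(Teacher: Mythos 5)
Your part (ii) is essentially the paper's argument: bound the quadratic term from above using the uniform bound \eqref{a.0} on the coefficients, and use equivalence of norms on the finite-dimensional space $V$ (the paper instead evaluates $\I(Re)$ for $e\in V$, $\|e\|=1$, and lets $R\to+\infty$; same content) so that the term $-\frac1p\int_\Omega|u_i|^p$ dominates for $p>2$. That part is fine.

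Part (i), however, proves a weaker statement than the lemma. You take $X=H_0^1(\Omega)$ and rely on $\lambda_i<\nu\mu_1$, which you justify by invoking $\lambda_i<\frac{p-2-\gamma}{p-2}\nu\mu_1$ as a ``standing assumption''. It is not: the only standing hypothesis in the paper is $\lambda_i<\mu_1$, and since $\nu\in(0,1]$ may be strictly less than $1$, one can have $\nu\mu_1\le\lambda_i<\mu_1$, in which case the quadratic form $\frac{\nu}{2}\int_\Omega|\nabla u_i|^2-\frac{\lambda_i}{2}\int_\Omega u_i^2$ is not positive definite on all of $H_0^1(\Omega)$ and your choice of $\rho$ fails. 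The sharper bound on $\lambda_i$ is assumed only in Theorem~\ref{least energy scalar} and in the main theorems, not in this lemma nor in Theorem~\ref{multiplicity scalar}, which is exactly where the lemma is used. This is why the statement allows $X$ to have finite (positive) codimension: the paper takes $X=X_K=\overline{\mathrm{span}\{\phi_k:\ k\ge K\}}$, the closed span of the high Dirichlet eigenmodes, on which $\int_\Omega|\nabla z|^2\ge\mu_K\int_\Omega z^2$, so that the quadratic part is bounded below by $\frac12\left(\nu-\frac{\lambda_i}{\mu_K}\right)\|z\|^2$ with a positive coefficient once $K$ is large --- for \emph{any} $\lambda_i$. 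With that substitution the rest of your computation for (i) goes through verbatim. (Your restricted version would still suffice downstream for the main theorems, where $\lambda_i<\frac{p-2-\gamma}{p-2}\nu\mu_1$ is assumed, but it does not prove the lemma as stated.)
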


\begin{proof}
\begin{itemize}
    \item[$(i)$] By \eqref{a.1}, we have
    \[\I(u_i)\ge\frac{\nu}{2}\int_\Omega|\nabla u_i|^2-\frac{\lambda_i}{2}\int_\Omega u_i^2-\frac{1}{p}\int_\Omega |u_i|^p.\]
    Let $\phi_k$ be an eigenfunction associated to the $k$-th eigenvalue $\mu_k$ of the Laplacian with Dirichlet boundary conditions and we define 
    \[X_{K}=\overline{\text{span}\{\phi_k: k\ge K\}}.\]
    For each $\rho>0$ and $z\in\partial B_\rho\cap X_K$, we obtain:
    \[
    \begin{aligned}
       \I(z)&\ge\frac{\nu}{2}\int_\Omega|\nabla  z|^2-\frac{\lambda_i}{2}\int_\Omega z^2-\frac{1}{p}\int_\Omega|z|^p\\
       &\ge\frac12\left(\nu-\frac{\lambda_i}{\mu_K}\right)\|z\|^2-\frac{C}{p}\|z\|^p\\
       &=\frac12\left(\nu-\frac{\lambda_i}{\mu_K}\right)\rho^2-\frac{C}{p}\rho^p.
    \end{aligned}
    \]
    Now, we can choose $K\ge1$ large enough and $\rho>0$ small enough such that $\I(z)\ge\alpha$, for some $\alpha>0$.
    \item[$(ii)$] Let $e\in V$ with $\|e\|=1$ and we set $v=Re$. Then
    \begin{align*}
        \I(Re)=\frac{R^2}{2}\int_\Omega A_i(x,Re)\nabla e\cdot\nabla e-\frac{\lambda_iR^2}{2}\int_\Omega e^2-\frac{R^p}{p}\int_\Omega |e|^p.
    \end{align*}
    Since the coefficients of $A_i(x,s)$ are bounded functions and $p>2$, we get $\I(Re)\to-\infty$ as $R\to+\infty$ and the proof is complete.\qedhere
\end{itemize}
\end{proof}

\begin{theorem}
    The functional $\I$ satisfies the $(PS)$-condition for every $i=1,2$.
\end{theorem}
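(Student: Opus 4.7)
The plan is to repeat, in the scalar setting, the exact three-step scheme that the paper already used for $\Jb$: first show every concrete Palais--Smale sequence is bounded, then show boundedness plus concrete PS yields strong convergence, and finally deduce the ordinary $(PS)$-condition via the scalar analogue of Corollary~\ref{PS e CPS}. Indeed, $\I$ is just $\Jb$ restricted to a single component with the coupling term $\frac{2\beta}{p}\int_\Omega |u_1|^{p/2}|u_2|^{p/2}$ removed, so all the machinery (differentiability in the sense of weak slope, the identification $|d\I|(u)\ge\|\I'(u)\|_{H^{-1}}$, the density/test-function result of Theorem~\ref{density 2.3.2}, and the strong-convergence tool of \cite[Lemma~3.4]{caninoserdica}) applies verbatim.

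First I would introduce a concrete Palais--Smale sequence $\{u_n\}\subset H_0^1(\Omega)$ for $\I$, i.e. $\I(u_n)\to c$ and $w_n:=Q_iu_n-\lambda_iu_n-|u_n|^{p-2}u_n\to0$ in $H^{-1}(\Omega)$, and compute
\[
\I(u_n)-\frac{1}{p}\langle w_n,u_n\rangle,
\]
using Theorem~\ref{density 2.3.2} to justify testing against $u_n$. Applying \eqref{a.2} and \eqref{a.1} exactly as in the proof of Lemma~\ref{PS1} gives
\[
C(1+\|u_n\|_{H_0^1})\ge \frac{p-2-\gamma}{2p}\nu\|u_n\|_{H_0^1}^2-\frac{p-2}{2p}\lambda_i\int_\Omega u_n^2,
\]
and the cases $\lambda_i\le 0$ (direct) and $\lambda_i>0$ (weighted Young with exponents $p/2,p/(p-2)$, absorbing $\int_\Omega u_n^2$ into $\int_\Omega|u_n|^p$ via a small constant in front of a trivial $L^p$-bound already implicit in the estimate) yield boundedness of $\{u_n\}$ in $H_0^1(\Omega)$.

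Next I would prove the scalar version of Theorem~\ref{PS and boundeness}: from boundedness extract $u_n\rightharpoonup u$ in $H_0^1(\Omega)$, observe that by Sobolev embedding $\lambda_iu_n\to\lambda_iu$ and $|u_n|^{p-2}u_n\to|u|^{p-2}u$ in $H^{-1}(\Omega)$ (here the subcritical range $p<2^*$ is essential), hence $Q_iu_n\to Q_iu$ in $H^{-1}(\Omega)$, and then apply \cite[Lemma~3.4]{caninoserdica} to upgrade weak convergence to strong convergence $u_n\to u$ in $H_0^1(\Omega)$. This shows $\I$ satisfies the concrete PS condition at every level.

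Finally I would invoke the scalar analogue of Corollary~\ref{PS e CPS}: every Palais--Smale sequence is automatically a concrete Palais--Smale sequence (by Theorem~\ref{differentiability Jb} applied componentwise), so $\I$ satisfies $(PS)_c$ for every $c\in\R$, i.e. the $(PS)$-condition. The only place where one must be careful is the passage from weak to strong convergence, which is the genuine analytic content and is the main obstacle; it is precisely handled by \cite[Lemma~3.4]{caninoserdica}, where the quasilinear lower semicontinuity argument for the coefficient $A_i(x,u_i)$ is carried out. Everything else is a line-by-line transcription of the proofs already given for $\Jb$.
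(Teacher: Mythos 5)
Your argument is correct, but it is not the route the paper takes: the paper disposes of this theorem in one line by citing \cite[Theorem 2.2 and Corollary 1.8]{caninoquasilineare1} with $g(x,s)=\lambda_i s+|s|^{p-2}s$, i.e.\ it invokes the already-established Palais--Smale theory for scalar quasilinear functionals of exactly this form, whereas you reconstruct that theory by specialising the system argument of Section~3 (Lemma~\ref{PS1}, Theorem~\ref{PS and boundeness}, Corollary~\ref{PS e CPS}) to one component. In substance the two proofs rest on the same ingredients --- boundedness of concrete Palais--Smale sequences via the combination $\I(u_n)-\tfrac1p\langle w_n,u_n\rangle$ together with \eqref{a.2}, and the weak-to-strong upgrade of \cite[Lemma~3.4]{caninoserdica} --- so your version is a legitimate, self-contained alternative; what it buys is transparency about which structural hypotheses are actually used, at the cost of repeating work the reference already contains. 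The one step you should spell out more carefully is the absorption for $\lambda_i>0$: after writing $\int_\Omega u_n^2\le C_1(\varepsilon)+C_2(\varepsilon)\int_\Omega|u_n|^p$ there is no positive $p$-th order term left in the lower bound $\tfrac{p-2-\gamma}{2p}\nu\|u_n\|^2-\tfrac{p-2}{2p}\lambda_i\int_\Omega u_n^2$ into which to absorb $C_2(\varepsilon)\int_\Omega|u_n|^p$; you must first use the identity $\langle w_n,u_n\rangle=o(1)\|u_n\|$ together with \eqref{a.0} and \eqref{a.2} to get the a priori upper bound $\int_\Omega|u_n|^p\le C\|u_n\|^2+o(1)\|u_n\|$, and only then does choosing $\varepsilon$ small close the estimate. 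Your phrase ``a trivial $L^p$-bound already implicit in the estimate'' gestures at this but, as written, leaves the genuinely quadratic control of $\int_\Omega|u_n|^p$ unjustified.
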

\begin{proof}
Let $\{u_{n,1}\}$ be a $(PS)$-sequence for $\mathcal I_{\lambda_1}$. Then $\{(u_{n,1},0)\}$ is a $(CPS)$-sequence for $\mathcal J_{\lambda,0}$. According to Corollary \ref{PS condition}, $\{u_{n,1}\}$ admits a convergent subsequence in $H_0^1(\Omega)$; hence $\mathcal I_{\lambda_1}$ satisfies the $(PS)$-condition. The case $i=2$ is similar.
\end{proof}

\begin{theorem}\label{multiplicity scalar}
    Assume that \eqref{a.0}-\eqref{a.2} hold and that $A_i(x,-s)=A_i(x,s)$ for every $i=1,2$. Then there exist infinitely many weak solutions of \eqref{Q_i} for every $i=1,2$. Furthermore, any weak solution $u_i$ is bounded, i.e. $u_i\in H_0^1(\Omega)\cap L^\infty(\Omega)$.
\end{theorem}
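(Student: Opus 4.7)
The plan is to apply the Equivariant Mountain Pass Theorem (Theorem \ref{MPequi}) to each functional $\I$, since all the necessary ingredients have already been assembled. First I would check the trivial preliminaries: $\I$ is continuous by the same argument used in Theorem \ref{differentiability Jb} (Lebesgue dominated convergence together with the uniform bound on $A_i$), we have $\I(0)=0$, and the symmetry $A_i(x,-s)=A_i(x,s)$ makes the quadratic term $\tfrac12\int_\Omega A_i(x,u_i)\nabla u_i\cdot\nabla u_i$ even in $u_i$, while $\int_\Omega u_i^2$ and $\int_\Omega |u_i|^p$ are obviously even. So $\I$ is a continuous even functional.

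Next I would invoke the preceding lemma, which supplies exactly the two geometric hypotheses of Theorem \ref{MPequi}: the lower bound on $\partial B_\rho\cap X_K$ for some finite-codimension subspace $X_K$, and the coercivity-from-below on any finite-dimensional subspace. The Palais–Smale condition has also been verified immediately before this theorem. Hence the hypotheses of the Equivariant Mountain Pass Theorem are all met, producing a divergent sequence $\{c_n\}\subset\R$ of critical values of $\I$, and therefore infinitely many distinct lower critical points $\{u_i^{(n)}\}\subset H_0^1(\Omega)$.

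To conclude that each such critical point is a weak solution of \eqref{Q_i}, I would reproduce the scalar version of Corollary \ref{PS e CPS}-$(i)$: if $|d\I|(u_i)=0$ then by the scalar analogue of Theorem \ref{differentiability Jb} one has $Q_iu_i-\lambda_i u_i-|u_i|^{p-2}u_i\in H^{-1}(\Omega)$ with zero norm, which is precisely the weak-solution identity tested against $C_c^\infty(\Omega)$. Boundedness of any weak solution is then immediate from Remark \ref{regularity}, which in turn relies on \cite[Theorem 4.1]{caninoserdica} and the subcritical growth of the nonlinearity $s\mapsto |s|^{p-2}s$.

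There is no genuine obstacle here, since the work has already been done in the preceding sections; the only point requiring a moment's care is the evenness of $\I$, which is why the hypothesis $A_i(x,-s)=A_i(x,s)$ is invoked in the statement. Everything else is a bookkeeping assembly of the lemma (geometric conditions), the Palais–Smale theorem, Theorem \ref{MPequi}, and Remark \ref{regularity}.
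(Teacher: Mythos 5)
Your proposal is correct and follows essentially the same route as the paper, which simply invokes the Equivariant Mountain Pass Theorem (Theorem \ref{MPequi}) together with the preceding geometric lemma and the Palais--Smale verification, exactly as you assemble them. The only cosmetic difference is in the regularity step: the paper cites \cite[Theorem 2.2.5-$(b)$]{nonsmooththeory1} rather than routing through Remark \ref{regularity} and \cite[Theorem 4.1]{caninoserdica}, but both yield the same $L^\infty$ bound for weak solutions with subcritical nonlinearity.
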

\begin{proof}
    The existence follows by Theorem \ref{MPequi}, as in \cite[Theorem 2.6]{caninoquasilineare1}. The regularity is obtained by applying \cite[Theorem 2.2.5-$(b)$]{nonsmooththeory1}.
\end{proof}

Now, we define the least energy level of \eqref{Q_i} 
\[L_i=\inf\left\{\I(z):\ z\in H_0^1(\Omega)\setminus\{0\},\ \text{$z$ solves \eqref{Q_i}}\right\}.\]
This infimum is well-defined because the set of nontrivial weak solution of \eqref{Q_i} is non-empty by Theorem \ref{multiplicity scalar}. We want to show that $L_i$ is achieved by some $z\in H_0^1(\Omega)\setminus\{0\}$.
\begin{theorem}\label{least energy scalar}
 Let $i=1,2$ and assume that $\lambda_i<\frac{p-2-\gamma}{p-2}\nu\mu_1$.
    There exists $u_i\in H_0^1(\Omega)\setminus\{0\}$ a weak solution of \eqref{Q_i} such that $\I(u_i)=L_i>0$, i.e. $u_i$ is a least energy solution of \eqref{Q_i}.
\end{theorem}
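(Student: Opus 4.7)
The plan is to pick a minimizing sequence of nontrivial critical points of $\I$ and pass to the limit via the $(PS)$-condition already established for $\I$. By Theorem \ref{multiplicity scalar} the set of nontrivial weak solutions of \eqref{Q_i} is nonempty, so I can choose $\{z_n\}\subset H_0^1(\Omega)\setminus\{0\}$ of weak solutions with $\I(z_n)\to L_i$. Since each $z_n$ is a lower critical point, $|d\I|(z_n)=0$, so $\{z_n\}$ is automatically a $(PS)_{L_i}$-sequence. Applying the $(PS)$-condition just proved, a subsequence satisfies $z_n\to u_i$ strongly in $H_0^1(\Omega)$; by continuity, $u_i$ is a weak solution of \eqref{Q_i} with $\I(u_i)=L_i$.

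The main obstacle, and indeed the only nontrivial step, is to exclude $u_i\equiv 0$, which amounts to producing a uniform lower bound $\|z_n\|_{H_0^1}\ge c>0$. Since every weak solution is bounded by Remark \ref{regularity}, Theorem \ref{density 2.3.2} legitimately allows me to test \eqref{Q_i} at $z_n$ against $z_n$ itself; the required sign condition is granted by \eqref{a.2}, which even forces the integrand $(D_sA_i(x,z_n)\nabla z_n\cdot\nabla z_n)z_n$ to be nonnegative. This yields
\begin{equation*}
\nu\,\|z_n\|_{H_0^1}^2 \;\le\; \int_\Omega A_i(x,z_n)\nabla z_n\cdot\nabla z_n \;\le\; T_i(z_n,z_n) \;=\; \lambda_i\int_\Omega z_n^2+\int_\Omega |z_n|^p.
\end{equation*}
Using Poincaré's inequality and the Sobolev embedding $\|z_n\|_{L^p}\le S\|z_n\|_{H_0^1}$, I obtain $(\nu-\lambda_i^+/\mu_1)\|z_n\|^2 \le S^p\|z_n\|^p$. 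The hypothesis $\lambda_i<\tfrac{p-2-\gamma}{p-2}\nu\mu_1$ in particular forces $\lambda_i^+<\nu\mu_1$, so the coefficient on the left is strictly positive and $p>2$ delivers the desired uniform lower bound, hence $u_i\not\equiv 0$.

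Finally, to prove $L_i>0$, I would rewrite $\I(u_i)$ using $u_i$ as a test function in \eqref{Q_i}, i.e. substituting $\int|u_i|^p = T_i(u_i,u_i)-\lambda_i\int u_i^2$ into the definition of $\I$, and then apply \eqref{a.2}, \eqref{a.1} and Poincaré's inequality exactly as in the proof of Lemma \ref{PS1}. This gives
\begin{equation*}
L_i=\I(u_i)\;\ge\;\frac{p-2-\gamma}{2p}\,\nu\,\|u_i\|_{H_0^1}^2-\frac{p-2}{2p}\,\frac{\lambda_i^+}{\mu_1}\,\|u_i\|_{H_0^1}^2\;=\;\frac{p-2}{2p}\Big(\tfrac{p-2-\gamma}{p-2}\nu-\tfrac{\lambda_i^+}{\mu_1}\Big)\|u_i\|_{H_0^1}^2,
\end{equation*}
which is strictly positive thanks to the standing assumption $\lambda_i<\tfrac{p-2-\gamma}{p-2}\nu\mu_1$ and the nontriviality of $u_i$ established in the previous paragraph.
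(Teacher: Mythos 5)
Your proposal is correct and follows essentially the same route as the paper: a minimizing sequence of nontrivial critical points, strong convergence via the $(PS)$-condition, a uniform lower bound on $\|z_n\|$ obtained by testing each solution against itself (justified by the $L^\infty$ regularity) together with \eqref{a.1} and the sign condition in \eqref{a.2}, and finally positivity of $L_i$ from the Nehari-type identity combined with the upper bound in \eqref{a.2}. The only cosmetic difference is your explicit use of $\lambda_i^{+}$ to treat both signs of $\lambda_i$ at once, which the paper handles implicitly.
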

\begin{proof}
    Let $\{z_n\}\subset H_0^1(\Omega)\setminus\{0\}$ be a sequence such that $z_n$ solves \eqref{Q_i} for every $n\ge1$, i.e. $|d\I|(z_n)=0$ for every $n\ge1$. By the $(PS)$-condition, we can assume that $z_n\to z$ with $z\in H_0^1(\Omega)$, up to a subsequence.
    According to \cite[Theorem 4.1]{caninoserdica}, we have $z_n\in L^\infty(\Omega)$ and 
    {\small
    \begin{equation}\label{eq nehari scalare}
        \begin{aligned}
          0&=\langle\I'(z_n),z_n\rangle=\int_\Omega A_i(x,z_n)\nabla z_n\cdot\nabla z_n+\frac12\int_\Omega (D_{z_n}A_i(x,z_n)\nabla z_n\cdot\nabla z_n)z_n\\
        &\quad-\lambda_i\int_\Omega z_n^2-\int_\Omega|z_n|^p.  
        \end{aligned}
    \end{equation}
    }
    According to \eqref{a.0}-\eqref{a.2} and by Sobolev's embedding, we get
    \[0\ge\nu\|z_n\|^2-\lambda_i\int_\Omega z_n^2-C\|z_n\|^p.\]
    Since $\lambda_i<\nu\mu_1$,
    \[\sqrt{\nu\int_\Omega |\nabla z_n|^2-\lambda_i\int_\Omega z_n^2}\]
    is an equivalent norm for $H_0^1(\Omega)$ and  there exists $\theta_i=\theta_i(\nu,\lambda_i)>0$ such that
    \[C\|z_n\|^p\ge\theta_i\|z_n\|^2.\]
    Hence, $z\not\equiv0$ and we have $\I(z)=L_i$. It remains to prove that $L_i>0$. Indeed, by \eqref{eq nehari scalare} and \eqref{a.2}:
    \begin{align*}
        \I(z)&=\left(\frac12-\frac1p\right)\int_\Omega
        \left[A_i(x,z)\nabla z\cdot\nabla z-\lambda_iz^2\right]\\
        &\quad-\frac{1}{2p}\int_\Omega
        \left(D_zA_i(x,z)\nabla z\cdot\nabla z\right)z\\
        &\ge \left(\frac12-\frac1p\right)\int_\Omega
        \left[A_i(x,z)\nabla z\cdot\nabla z-\lambda_iz^2\right]\\
        &\quad-\frac{\gamma}{2p}\int_\Omega A_i(x,z)\nabla z\cdot\nabla z\\
        &\ge\frac{p-2-\gamma}{2p}\nu\int_\Omega|\nabla z|^2-\frac{p-2}{2p}\lambda_i\int_\Omega z^2\\
        &\ge\left(\frac{p-2-\gamma}{2p}\nu-\frac{(p-2)\lambda_i}{2p\mu_1}\right)\|z\|^2>0.
    \end{align*}
    Thus, $L_i=\I(z)>0$ by the assumption $\lambda_i<\frac{p-2-\gamma}{p-2}\nu\mu_1$.
\end{proof}

\section{Proof of Theorem \ref{thm1.2}}
We define the subspace of codimension $k_0=k_{0,1}+k_{0,2}\ge1$
{\footnotesize
\[
Y_{k_0}:=\overline{\left\{u=(u_1,u_2)\in H_0^1(\Omega;\R^2):\quad u_1=\sum_{n\ge k_{0,1}}c_{n,1}\phi_n,\quad u_2=\sum_{n\ge k_{0,2}}c_{n,2}\phi_n\right\}}.
\]
}
  where $\phi_j$ is an eigenfunction associated to eigenvalue problem:
\[
\begin{cases}
    -\Delta \phi_j=\mu_j\phi_j&\text{in $\Omega$},\\
    \phi_j=0&\text{on $\partial\Omega$},
\end{cases}
\]
and $\mu_j\to+\infty$ as $j\to+\infty$.

The functional $\Jb$ has a linking geometry given by the following lemmas. \begin{lemma}\label{geometria1}
     Let $\alpha>0$ and $\beta>0$. We can find $k_0\ge1$ large enough and a suitable radius $\rho>0$  such that $\Jb\ge\alpha$ on $\partial B_\rho\cap Y_{k_0}$.

  \end{lemma}
  \begin{proof}
For each $u\in \partial B_\rho\cap Y_{k_0},$ by Young's inequality:
\begin{align*}
    \Jb(u)&\ge \frac{\nu}{2}\int_\Omega (|\nabla u_1|^2+|\nabla u_2|^2)-\frac{\lambda_1}{2}\int_\Omega u_{1}^2-\frac{\lambda_2}{2}\int_\Omega u_2^2\\
    &\quad-\frac1p\int_\Omega|u_1|^p-\frac{\beta}{p}\int_\Omega |u_1|^p-\frac{\beta}{p}\int_\Omega |u_2|^p-\frac{1}{p}\int_\Omega |u_2|^p\\
    &\ge\frac{\nu}{2}\|u\|^2-\frac{\lambda_1}{2}\int_\Omega u_{1}^2-\frac{\lambda_2}{2}\int_\Omega u_2^2
   -\frac{\beta+1}{p}C(\|u_1\|^p+\|u_2\|^p),
\end{align*}
where $C>0$ is the constant of the embedding $H_0^1(\Omega)\hookrightarrow L^p(\Omega)$. Moreover,
since $u\in Y_{k_0}$, we obtain that
\begin{align*}
\Jb(u)&\ge\frac12\left(\nu-\frac{\lambda_1}{\mu_{k_0,1}}\right)\|u_1\|^2
+\frac12\left(\nu-\frac{\lambda_2}{\mu_{k_0,2}}\right)\|u_2\|^2\\
&\quad-\frac{C(\beta+1)}{p}(\|u_1\|^p+\|u_2\|^p).
\end{align*}
 Thus, we can choose $\rho$ and $k_0=k_{0,1}+k_{0,2}\ge1$ such that $\Jb(u)\ge\alpha$ on $\partial B_\rho\cap Y_{k_0}$.
\end{proof}

\begin{lemma}\label{geometria2}
 Assume that $\lambda_1,\lambda_2<\frac{p-2-\gamma}{p-2}\nu\mu_1$ and let $L=\min\{L_1,L_2\}$, where $L_i$ denotes the least energy level of \eqref{Q_i}. For every $m\in\N$, there exist an $m$-dimensional subspace $Z_m\subset H_0^1(\Omega;\R^2)$ and $\beta_m>0$, such that
\[
\max_{u\in Z_m}\Jb(u)<L
\quad\text{for every }\beta>\beta_m.
\]
\end{lemma}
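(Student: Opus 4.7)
The plan is to construct $Z_m$ as a \emph{diagonal} subspace, thereby killing the semi-trivial obstruction and fully activating the cooperative coupling. Concretely, I choose a $2m$-dimensional subspace $V\subset H_0^1(\Omega)$ (for instance $V=\mathrm{span}\{\phi_1,\dots,\phi_{2m}\}$, the span of the first $2m$ Dirichlet eigenfunctions) and set
\[
Z_m:=\{(v,v): v\in V\}\subset H_0^1(\Omega;\R^2),
\]
which is a linear subspace of dimension $2m$. Its crucial feature is that every non-zero element has both components nonzero, so the coupling term is fully effective. On the diagonal the functional collapses to
\[
\Jb(v,v)=\frac{1}{2}\int_\Omega[A_1(x,v)+A_2(x,v)]|\nabla v|^2-\frac{\lambda_1+\lambda_2}{2}\int_\Omega v^2-\frac{2(1+\beta)}{p}\int_\Omega |v|^p,
\]
so the entire $\beta$-coupling merges into a strongly negative $|v|^p$ contribution of order $\beta$.

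I would then estimate from above. For the quadratic part, the entry bound \eqref{a.0} yields $A_i(x,v)\xi\cdot\xi\le C_0 N|\xi|^2$; combined with Poincaré's inequality to absorb $\int_\Omega v^2$, this gives a constant $A_0=A_0(C_0,N,|\lambda_1|+|\lambda_2|,\mu_1)>0$, \emph{independent of $\beta$}, controlling the first two terms by $A_0\|v\|^2$. For the $|v|^p$ part, since $V$ is finite-dimensional, the continuous positive map $v\mapsto\int_\Omega|v|^p$ is uniformly bounded below on the unit sphere of $V$ by some $c_V>0$; hence $\int_\Omega|v|^p\ge c_V\|v\|^p$ on $V$. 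Putting these together,
\[
\Jb(v,v)\le A_0\|v\|^2-\frac{2(1+\beta)c_V}{p}\|v\|^p\qquad\text{for every }v\in V,\ \beta>-1.
\]

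Finally, a routine single-variable maximization of $g(t)=A_0 t^2-\tfrac{2(1+\beta)c_V}{p}t^p$ for $t\ge 0$ gives a maximum of order $\beta^{-2/(p-2)}$, which tends to $0$ as $\beta\to+\infty$. Since $L=\min\{L_1,L_2\}>0$ by Theorem~\ref{least energy scalar}, there exists $\beta_1=\beta_1(m)>0$ such that the maximum of $g$ is strictly below $L$ for every $\beta>\beta_1$; hence $\Jb(v,v)<L$ for every $(v,v)\in Z_m$, which is the desired conclusion.

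The step I expect to be the genuine conceptual obstacle is the choice of $Z_m$. The naive candidate $V'\times V'$ with $\dim V'=m$ \emph{fails}: at semi-trivial points $(u,0)\in V'\times V'$ the coupling vanishes and one only sees $\mathcal I_{\lambda_1}(u)$, whose maximum on any subspace containing $\phi_1$ is bounded below by the mountain-pass level $L_1\ge L$. Diagonalizing via $v\mapsto (v,v)$ removes this obstruction while preserving dimension (provided $\dim V=2m$), and makes the estimate uniform in $\beta$. The remaining ingredients (operator bound from \eqref{a.0}, Poincaré, equivalence of norms on finite-dimensional $V$) are standard.
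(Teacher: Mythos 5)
Your proof is correct, but it takes a genuinely different --- and in fact more robust --- route than the paper's. The paper keeps the product $Z_m:=W_m\times W_m$ with $W_m=\mathrm{span}\{\phi_1,\dots,\phi_m\}$ and argues indirectly: it asserts that the maximum of $\Jb$ over $W_m\times W_m$ is attained at a diagonal point $(w_\beta,w_\beta)$, then uses the criticality identity $\langle\Jb'(w_\beta,w_\beta),(w_\beta,w_\beta)\rangle=0$ to deduce $\int_\Omega|w_\beta|^p\le K/\beta$, hence $w_\beta\to0$ and the maximum tends to $0<L$. You instead take the diagonal $\{(v,v):v\in V\}$ over a $2m$-dimensional $V$ and obtain the bound by a direct one-variable maximization with the explicit rate $\beta^{-2/(p-2)}$. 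What your choice buys is exactly what your closing remark identifies: the product $W_m\times W_m$ contains the semi-trivial ray $\{(t\phi_1,0):t\ge0\}$, on which $\Jb$ reduces to $\mathcal I_{\lambda_1}$ and whose supremum is at least the scalar mountain-pass level, hence at least $L_1\ge L$; moreover, for $\beta>0$ the coupling $-\frac{2\beta}{p}\int_\Omega|u_1|^{p/2}|u_2|^{p/2}$ penalizes overlap, so the maximum over the product is pushed toward semi-trivial configurations, not toward the diagonal. The paper's claim that the maximum over $W_m\times W_m$ is attained on the diagonal is therefore unjustified, and its stated conclusion $\max_{W_m\times W_m}\Jb<L$ cannot hold as written; your diagonal $Z_m$ --- which is all that Theorem \ref{linking theorem} and the proof of Theorem \ref{thm1.2} require, since only $\dim Z_m=2m$ enters --- delivers the estimate cleanly. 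The remaining ingredients of your argument (the bound $A_i(x,s)\xi\cdot\xi\le C_0N|\xi|^2$ from \eqref{a.0}, Poincar\'e, equivalence of norms on the finite-dimensional $V$, and $L>0$ from Theorem \ref{least energy scalar}) are all sound.
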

\begin{proof}
    Let $W_m=span\{\phi_1,\dots,\phi_m\}$ and, for every $w\in W_m$, we compute:

    {\small
    \begin{align*}
        \Jb(w,w)&\le NC_0\left(\int_\Omega|\nabla w|^2-\frac{\lambda_1+\lambda_2}{2NC_0}\int_\Omega w^2\right)-\frac{2}{p}\int_\Omega |w|^p-\frac{2\beta}{p}\int_\Omega |w|^p.
    \end{align*}}
    If $\mu_m\le\frac{\lambda_1+\lambda_2}{2NC_0}$ the conclusion holds for $\beta_m=0$. Otherwise, we take $w_\beta\in W_m\cap B_r$, with $r>0$ independent of $\beta$,  such that $$0<\max\left\{\Jb(w,w):\ w\in W_m\right\}=\Jb(w_\beta,w_\beta).$$
    Since $w_\beta\in H_0^1(\Omega)\cap L^\infty(\Omega)$, we have:
    \begin{align*}
        0&=\langle \Jb'(w_\beta,w_\beta),(w_\beta,w_\beta)\rangle\\
        &\le\int_\Omega A_1(x,w_\beta)\nabla w_\beta\cdot\nabla w_\beta+\int_\Omega A_2(x,w_\beta)\nabla w_\beta\cdot\nabla w_\beta\\
        &\quad+\frac12\int_\Omega \left(D_{w_\beta}A_1(x,w_\beta)\nabla w_\beta\cdot\nabla w_\beta \right)w_\beta\\
        &\quad+\frac12\int_\Omega \left(D_{w_\beta}A_2(x,w_\beta)\nabla w_\beta\cdot\nabla w_\beta\right) w_\beta\\
        &\quad-(\lambda_1+\lambda_2)\int_\Omega w_\beta^2
        -2(1+\beta)\int_\Omega |w_\beta|^{p}.
    \end{align*}
    It follows that there exists $K>0$ independent of $\beta$ such that
    $$\int_\Omega|w_\beta|^{p}\le\frac{K}{\beta},$$
    and this implies that $w_\beta\to0$ in $H_0^1(\Omega)$ as $\beta\to+\infty$, since any norm is equivalent in $W_m$. Hence, there exists $\beta_m>0$ such that, for every $\beta>\beta_m$, we have
\[
\max_{u\in Z_m}\Jb(u)<L,
\]
with $Z_m:=\{(w,w):\ w\in W_m\}$.
\end{proof}


\begin{remark}
    We point out that the restriction
    $\lambda_1,\lambda_2<\frac{p-2-\gamma}{p-2}\nu\mu_1$ is used only to
    ensure that, for each $i=1,2$, the scalar least energy level $L_i$ is
    positive and is achieved by a solution of \eqref{Q_i}. Consequently, if
    one could prove that $L_i>0$ is achieved for every $\lambda_i\in\R$,
    then Lemma \ref{geometria2} would remain valid for every
    $\lambda_1,\lambda_2\in\R$.
\end{remark}
\begin{proof}[Proof of {Theorem \ref{thm1.2}}]
We divide the proof in two steps.\\
\textbf{Step 1}. Existence of arbitrarily many fully nontrivial solutions.\\ 
Let  $L=\min\{L_1,L_2\}$, and let $k_0\ge1$ be the integer found in Lemma \ref{geometria1}. For any $k\ge1$, we choose $m:=k+k_0$. By Lemmas \ref{geometria1}-\ref{geometria2}, for every $\beta>\beta_m$ we have
\[
\dim Z_m-\operatorname{codim}Y_{k_0}=m-k_0=k.
\]
Therefore, Theorem \ref{linking theorem} gives at least $k$ critical points for the functional $\Jb$, $u^{(h)}=(u_1^{(h)},u_2^{(h)})$, with $\Jb(u^{(h)})<\min\{L_1,L_2\}$ and $h=1,\dots,k$. Hence, $u_1^{(h)},u_2^{(h)}\not\equiv0$ and, according to Remark \ref{regularity}, $u^{(h)}\in H_0^1(\Omega;\R^2)\cap L^\infty(\Omega;\R^2)$. \\
\textbf{Step 2}. Existence of a least energy solution.

 Let $\{u_n\}\subset H_0^1(\Omega;\R^2)\cap L^\infty(\Omega;\R^2)$ such that $u_{n,1},u_{n,2}\not\equiv0$, $u_n$ solves \eqref{Qb} and $\Jb(u_n)\to e_\beta$.  Reasoning as in Theorem \ref{least energy scalar} we obtain that $u_n\to u^*\not\equiv0$ and $u^*$ satisfies $\Jb(u^*)< \min\{L_1,L_2\}$ by Step 1. Then $u_1^*,u_2^*\not\equiv0$ and $u^*$ is a least energy solution, i.e. achieves $e_\beta$. 
\end{proof}
\section{Proof of Theorem \ref{main thm competitive quasilinear}}
Throughout this section we assume that $\beta<-1$ and $\lambda_1,\lambda_2<\frac{p-2-\gamma}{p-2}\nu\mu_1$.
Let $l_{\lambda,\beta}:=\inf_{\Nb}\Jb$.
We note that any weak solution, i.e. $u\in H_0^1(\Omega;\R^2)$ such that $|d\Jb|(u)=0$, belongs to $\Nb$. Then $l_{\lambda,\beta}\le e_\beta$ and a critical point at level $l_{\lambda,\beta}$ is also a least energy solution.

For every $u=(u_1,u_2)\in\mathcal H$, we define $h_u:(0,+\infty)^2\to\R$ as follows:
\begin{align*}
  h_u(t)=\Jb(t_1u_1,t_2u_2) &=\sum_{i=1}^2\frac{t_i^2}{2}\int_\Omega\big[ A_i(x,t_iu_i)\nabla u_i\cdot\nabla u_i-\lambda_iu_i^2\big]\\
  &\quad-\frac1p\int_\Omega\big[t_1^p|u_1|^p+2\beta t_1^{\frac p2}t_2^{\frac p2}|u_1|^{\frac p2}|u_2|^{\frac p2}+t_2^p|u_2|^p\big].
\end{align*}

The Nehari set $\Nb$ is non-empty. Indeed, if $u\in C_c^\infty(\Omega;\R^2)\cap \mathcal H$ and $u_1,u_2$ have disjoint supports, the function $h_u(t)$ has the same behavior as \[g(t)=c_1|t|^2-c_2|t|^p\] at infinity and also near the origin. Then there exists a maximizer $t_{\max}\in(0,+\infty)^2$. Thus, $t_{\max}u=(t_{\max,1}u_1,t_{\max,2}u_2)\in\Nb$.  

\begin{lemma}\label{coerciva}
    The functional ${\Jb}$ is positive and coercive on $\mathcal N_{\lambda,\beta}$ when $\lambda_1,\lambda_2<\frac{p-2-\gamma}{p-2}\nu\mu_1$.
\end{lemma}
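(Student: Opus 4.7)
The plan is to exploit the Nehari constraint to eliminate the $p$-homogeneous and $\beta$-coupling terms from $\Jb$, reducing $\Jb|_{\Nb}$ to a quadratic expression that one can control via $(a.2)$, $(a.1)$ and Poincaré. Concretely, I would start from the identity $\Jb(u)=\Jb(u)-\frac{1}{p}\sum_{i=1}^2 N_i(u)$, where $N_i(u)=0$ is the $i$-th Nehari equation, since this combination is the natural one to kill the nonlinear self-terms $\int|u_i|^p$ as well as the cross-term $\beta\int|u_1|^{p/2}|u_2|^{p/2}$. A short algebraic check shows that the $\beta$ contribution cancels out completely (the factor is $\frac{2\beta}{p}$ on both sides), leaving
\[
\Jb(u)=\sum_{i=1}^2\left[\frac{p-2}{2p}\int_\Omega A_i(x,u_i)\nabla u_i\cdot\nabla u_i-\frac{p-2}{2p}\lambda_i\int_\Omega u_i^2-\frac{1}{2p}\int_\Omega \bigl(D_{u_i}A_i(x,u_i)\nabla u_i\cdot\nabla u_i\bigr)u_i\right]
\]
for every $u\in\Nb$. (The rightmost integral is finite on $\Nb$, and in fact non-negative, by $(a.2)$.)

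Next, I would invoke $(a.2)$ to bound the last term by $\frac{\gamma}{2p}\int A_i(x,u_i)\nabla u_i\cdot\nabla u_i$, and then $(a.1)$ to obtain the ellipticity estimate $A_i(x,u_i)\nabla u_i\cdot\nabla u_i\ge\nu|\nabla u_i|^2$. This produces
\[
\Jb(u)\ge\sum_{i=1}^2\left[\frac{p-2-\gamma}{2p}\,\nu\|u_i\|_{H_0^1}^2-\frac{p-2}{2p}\lambda_i\int_\Omega u_i^2\right].
\]
Applying Poincaré, $\int_\Omega u_i^2\le \mu_1^{-1}\|u_i\|_{H_0^1}^2$ (used only when $\lambda_i>0$; otherwise the term is already non-negative), the bracket becomes $\bigl[\tfrac{p-2-\gamma}{2p}\nu-\tfrac{p-2}{2p\mu_1}\lambda_i^+\bigr]\|u_i\|_{H_0^1}^2$. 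The hypothesis $\lambda_i<\frac{p-2-\gamma}{p-2}\nu\mu_1$ is precisely what makes this coefficient strictly positive, say $\ge c_0>0$.

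The two conclusions then follow at once. \textbf{Positivity}: for $u\in\Nb$ one has $u_i\not\equiv 0$ by definition of $\mathcal H$, so $\Jb(u)\ge c_0(\|u_1\|_{H_0^1}^2+\|u_2\|_{H_0^1}^2)>0$. \textbf{Coercivity}: the same lower bound shows $\Jb(u)\to+\infty$ as $\|u\|_{H_0^1(\Omega;\R^2)}\to\infty$ along $\Nb$. I do not anticipate a real obstacle here since the algebraic cancellation of $\beta$ is straightforward and all the structural hypotheses $(a.1)$--$(a.2)$ are used exactly as in the scalar computation of Theorem \ref{least energy scalar}; the only subtlety is verifying that the term $\int(D_{u_i}A_i\nabla u_i\cdot\nabla u_i)u_i$ is genuinely finite on $\Nb$, which is built into the definition of $\Nb$ (and forced by non-negativity from $(a.2)$).
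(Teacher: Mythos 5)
Your proposal is correct and takes essentially the same route as the paper: subtracting $\frac1p$ times the two Nehari identities cancels both the $p$-power terms and the $\beta$-coupling term, and then the bound $0\le s D_sA_i\xi\cdot\xi\le\gamma A_i\xi\cdot\xi$, ellipticity, and Poincar\'e give $\Jb(u)\ge\sum_i\bigl(\tfrac{p-2-\gamma}{2p}\nu-\tfrac{(p-2)\lambda_i^+}{2p\mu_1}\bigr)\|u_i\|^2$, from which positivity and coercivity follow exactly as in the paper's argument.
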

\begin{proof}
    For each $u\in\mathcal N_{\lambda,\beta}$, we have
    \[
    \resizebox{\linewidth}{!}{$\displaystyle
    \begin{aligned}
        \Jb(u)&=\sum_{i=1}^2\bigg[\frac12\int_\Omega A_i(x,u_i)\nabla u_i\cdot\nabla u_i-\frac{\lambda_i}{2}\int_\Omega u_i^2-\frac1p\int_\Omega |u_i|^p\bigg]-\frac{2\beta}{p}\int_\Omega|u_1|^{\frac p2}|u_2|^{\frac p2}\\
        &=\left(\frac12-\frac1p\right)\sum_{i=1}^2\int_\Omega \left[A_i(x,u_i)\nabla u_i\cdot\nabla u_i-\lambda_i u_i^2\right]-\frac{1}{2p}\sum_{i=1}^2\int_\Omega\left( D_{u_i}A_i(x,u_i)\nabla u_i\cdot\nabla u_i\right) u_i\\
        &\ge\frac{p-2-\gamma}{2p}\sum_{i=1}^2\int_\Omega A_i(x,u_i)\nabla u_i\cdot\nabla u_i-\frac{p-2}{2p}\sum_{i=1}^2\int_\Omega \lambda_iu_i^2\\
        &\ge\frac{p-2-\gamma}{2p}\nu\sum_{i=1}^2\|u_i\|^2-\frac{p-2}{2p}\sum_{i=1}^2\int_\Omega \lambda_iu_i^2.
    \end{aligned}
    $}
    \]
    Since $\gamma<p-2$, the coercivity is clear for $\lambda_i\le0$. Otherwise, 
    \[\Jb(u)\ge\sum_{i=1}^2\left(\frac{p-2-\gamma}{2p}\nu-\frac{(p-2)\lambda_i}{2p\mu_1}\right)\|u_i\|^2,\]
    
   and the claim follows for \(\lambda_i<\frac{(p-2-\gamma)\nu\mu_1}{p-2}.\)
\end{proof}
\begin{lemma}\label{chiusura}
    Let $u\in\mathcal N_{\lambda,\beta}$ and assume that $\lambda_1,\lambda_2<\nu\mu_1$. Then there exist $C_1,C_2>0$ such that \[\int_\Omega|\nabla u_i|^2\ge C_1,\ \int_\Omega |u_i|^p\ge C_2,\ \ i=1,2.\]
    As a consequence, $\mathcal N_{\lambda,\beta}$ is closed w.r.t. the strong topology in $H_0^1(\Omega;\R^2)$ and this implies that $\mathcal N_{\lambda,\beta}$ is a complete metric space.
\end{lemma}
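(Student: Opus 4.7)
The plan is to derive the lower bounds directly from the Nehari identities using ellipticity together with Poincaré and the Sobolev embedding, and then obtain closedness (hence completeness, since $H_0^1(\Omega;\R^2)$ is a complete metric space) by showing that every term defining $\Nb$ is continuous with respect to the strong $H_0^1$-topology.

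For the lower bounds, I fix $u \in \Nb$ and exploit that, by the standing hypothesis of Theorem~\ref{main thm competitive quasilinear}, $\beta < 0$, so the coupling $\beta\int_\Omega|u_1|^{p/2}|u_2|^{p/2}$ is non-positive. The $i$-th Nehari identity then yields
\[
\int_\Omega A_i(x,u_i)\nabla u_i \cdot \nabla u_i + \tfrac{1}{2}\int_\Omega (D_{u_i}A_i(x,u_i)\nabla u_i\cdot\nabla u_i)u_i - \lambda_i\int_\Omega u_i^2 \le \int_\Omega |u_i|^p.
\]
Dropping the non-negative middle term by \eqref{a.2}, applying \eqref{a.1}, using Poincaré to bound $\lambda_i\|u_i\|_2^2 \le (\lambda_i/\mu_1)\|\nabla u_i\|_2^2$, and invoking the Sobolev embedding $H_0^1(\Omega)\hookrightarrow L^p(\Omega)$, I arrive at $(\nu - \lambda_i/\mu_1)\|\nabla u_i\|_2^2 \le C^p\|\nabla u_i\|_2^p$. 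Since $\lambda_i < \nu\mu_1$ and $u_i \not\equiv 0$, this forces $\|\nabla u_i\|_2$ away from zero, producing $C_1>0$; reinserting into the same inequality yields $C_2>0$.

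For the closedness assertion, I take $\{u^n\} \subset \Nb$ with $u^n \to u$ strongly in $H_0^1(\Omega;\R^2)$. Passing the uniform $L^p$-lower bound to the limit gives $u_1,u_2 \not\equiv 0$, i.e. $u \in \mathcal H$. The integrals $\int u_i^2$, $\int |u_i|^p$ and $\int |u_1|^{p/2}|u_2|^{p/2}$ pass to the limit by strong $L^2$- and $L^p$-convergence. For $\int A_i(x,u_i^n)\nabla u_i^n \cdot \nabla u_i^n$ I split the difference with the limit quantity as
\[
\int_\Omega A_i(x,u_i^n)(\nabla u_i^n - \nabla u_i)\cdot(\nabla u_i^n + \nabla u_i) + \int_\Omega (A_i(x,u_i^n) - A_i(x,u_i))\nabla u_i \cdot \nabla u_i,
\]
handling the first piece by Cauchy--Schwarz together with \eqref{a.0}, and the second by dominated convergence after passing to an a.e.-convergent subsequence.

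The main obstacle is the cubic term $\int (D_{u_i^n}A_i(x,u_i^n)\nabla u_i^n \cdot \nabla u_i^n)u_i^n$, since a priori $|\nabla u_i^n|^2|u_i^n|$ has no obvious common pointwise dominant. My plan is to leverage \eqref{a.2} in its sharp form, which gives the pointwise bound
\[
\bigl|(D_{u_i^n}A_i(x,u_i^n)\nabla u_i^n \cdot \nabla u_i^n)u_i^n\bigr| \le \gamma\, A_i(x,u_i^n)\nabla u_i^n\cdot\nabla u_i^n \le \gamma C_0 |\nabla u_i^n|^2.
\]
Because $|\nabla u_i^n|^2 \to |\nabla u_i|^2$ in $L^1(\Omega)$, the majorants form an equi-integrable family, so Vitali's convergence theorem combined with a.e. convergence (along a subsequence on which both $u_i^n$ and $\nabla u_i^n$ converge pointwise) delivers $L^1$-convergence of the integrand; uniqueness of the limit promotes this to the full sequence. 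Consequently the Nehari identities pass to the limit, $u\in\Nb$, and $\Nb$ inherits completeness from $H_0^1(\Omega;\R^2)$.
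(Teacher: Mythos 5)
Your proof of the quantitative lower bounds is essentially the paper's own argument: use the $i$-th Nehari identity, discard the non-negative term $\tfrac12\int(D_{u_i}A_i\nabla u_i\cdot\nabla u_i)u_i$ and the non-positive coupling term (here, as in the paper, one implicitly uses $\beta<0$, which is the standing assumption whenever $\Nb$ is in play), then combine \eqref{a.1} with Poincar\'e and Sobolev; your Poincar\'e step as written presupposes $\lambda_i\ge0$, but for $\lambda_i<0$ one simply drops $-\lambda_i\int u_i^2\ge0$, exactly as the paper's case split does. For the closedness the paper offers no argument beyond ``as a consequence,'' whereas you supply one; your treatment of the cubic term via the pointwise domination $0\le(D_{u_i^n}A_i\nabla u_i^n\cdot\nabla u_i^n)u_i^n\le\gamma A_i(x,u_i^n)\nabla u_i^n\cdot\nabla u_i^n\le C|\nabla u_i^n|^2$ from \eqref{a.2} and \eqref{a.0}, combined with $L^1$-convergence of $|\nabla u_i^n|^2$ and Vitali, is correct and is a worthwhile addition the paper leaves implicit.
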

\begin{proof}
    Let $u=(u_1,u_2)\in\mathcal N_{\lambda,\beta}$. Then
    \[
    \resizebox{\linewidth}{!}{$\displaystyle
    \begin{aligned}
        \nu\|u_i\|^2&\le\int_\Omega A_i(x,u_i)\nabla u_i\cdot\nabla u_i\le\int_\Omega A_i(x,u_i)\nabla u_i\cdot\nabla u_i+\frac12\int_\Omega \left(D_{u_i}A_i(x,u_i)\nabla u_i\cdot\nabla u_i\right)u_i\\
        &\le\int_\Omega A_i(x,u_i)\nabla u_i\cdot\nabla u_i+\frac12\int_\Omega \left(D_{u_i}A_i(x,u_i)\nabla u_i\cdot\nabla u_i\right)u_i-\beta\int_\Omega|u_1|^{\frac p2}|u_2|^{\frac p2}\\
        &=\lambda_i\int_\Omega u_i^2+\int_\Omega |u_i|^p.
    \end{aligned}
    $}
    \]
  If $\lambda_i\le0$, by the Sobolev embedding, there exists $C>0$ such that
    \[\nu\|u_i\|^2\le C\|u_i\|^p,\]
    and there exist  $C_1,C_2>0$ such that
 \[\int_\Omega|\nabla u_i|^2\ge C_1,\qquad\int_\Omega |u_i|^p\ge C_2.\]
    If $0<\lambda_i<\nu\mu_1$ we have
    \[\nu\left(1-\frac{\lambda_i}{\nu\mu_1}\right)\|u_i\|^2\le\nu\int_\Omega|\nabla u_i|^2-\lambda_i\int_\Omega u_i^2\le\int_\Omega |u_i|^p,\]
   and again the claim follows by the Sobolev embedding.
\end{proof}

\begin{lemma}\label{unique critical point}
 For every $u=(u_1,u_2)\in \mathcal N_{\lambda,\beta}$, the function $h_u$ has at most one critical point.
\end{lemma}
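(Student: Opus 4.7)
The plan is to recast the critical-point system $\nabla h_u(t_1,t_2)=0$ in terms of two scalar fibering functions $\psi_1,\psi_2$ that are strictly decreasing, and then use the sign $\beta<0$ to exclude two distinct simultaneous solutions. For $i=1,2$ I set
\[
\Phi_i(t):=\int_\Omega A_i(x,tu_i)\nabla u_i\cdot\nabla u_i + \frac{t}{2}\int_\Omega (D_sA_i(x,tu_i)\nabla u_i\cdot\nabla u_i)\,u_i,
\]
\[
\psi_i(t):=\frac{\Phi_i(t)-\lambda_i\int_\Omega u_i^2}{t^{p-2}},\qquad t>0,
\]
and observe that dividing the Euler relation $\partial_ih_u(t_1,t_2)=0$ by $t_i^{p-1}$ rewrites it as
\[
\psi_i(t_i)=\int_\Omega |u_i|^p+\beta\Bigl(\frac{t_{3-i}}{t_i}\Bigr)^{p/2}\int_\Omega |u_1|^{p/2}|u_2|^{p/2},\qquad i=1,2.
\]

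The key step is the strict monotonicity of each $\psi_i$ on $(0,+\infty)$. Introducing $\phi_i(t):=\int_\Omega A_i(x,tu_i)|\nabla u_i|^2$, so that $\phi_i'(t)=\int_\Omega(D_sA_i(x,tu_i)\nabla u_i\cdot\nabla u_i)u_i$ and $\Phi_i(t)=\phi_i(t)+\tfrac{t}{2}\phi_i'(t)$, I decompose
\[
\psi_i(t)=t^{2-p}\Bigl(\phi_i(t)-\lambda_i\int_\Omega u_i^2\Bigr) + \frac{1}{2}\,t^{3-p}\phi_i'(t).
\]
For the second summand, the pointwise integrand $t^{3-p}D_sA_i(x,tu_i)\,u_i\,\nabla u_i\cdot\nabla u_i$ factors on $\{u_i>0\}$ as $u_i^{p-2}\cdot(tu_i)^{3-p}D_sA_i(x,tu_i)\nabla u_i\cdot\nabla u_i$, which is strictly decreasing in $t$ by \eqref{a.3}; on $\{u_i<0\}$ the symmetry $A_i(x,-s)=A_i(x,s)$ reduces the analysis to $|u_i|$. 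For the first summand, the upper bound $(tu_i)D_sA_i(x,tu_i)\xi\cdot\xi\le\gamma A_i(x,tu_i)\xi\cdot\xi$ from \eqref{a.2} integrates to $t\phi_i'(t)\le\gamma\phi_i(t)$; coupling this with $\phi_i(t)\ge\nu\|u_i\|^2$ from \eqref{a.1} and the Poincaré inequality $\int_\Omega u_i^2\le\|u_i\|^2/\mu_1$ gives
\[
\frac{d}{dt}\Bigl[t^{2-p}\bigl(\phi_i(t)-\lambda_i\int_\Omega u_i^2\bigr)\Bigr] \le t^{1-p}\|u_i\|^2\Bigl(\frac{(p-2)\lambda_i}{\mu_1}-(p-2-\gamma)\nu\Bigr) < 0,
\]
the strict sign being precisely the content of $\lambda_i<\tfrac{p-2-\gamma}{p-2}\nu\mu_1$.

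Granted the monotonicity, uniqueness follows by a short ratio argument. Suppose $(s_1,s_2),(t_1,t_2)$ are two critical points of $h_u$ and, without loss of generality, $s_1\le t_1$. Subtracting the two $\psi_1$-identities yields
\[
\psi_1(s_1)-\psi_1(t_1)=\beta\Bigl[\bigl(s_2/s_1\bigr)^{p/2}-\bigl(t_2/t_1\bigr)^{p/2}\Bigr]\int_\Omega|u_1|^{p/2}|u_2|^{p/2},
\]
whose right-hand integral is strictly positive. Strict monotonicity of $\psi_1$ combined with $\beta<0$ then imposes $s_2/s_1\le t_2/t_1$, with strict inequality whenever $s_1<t_1$; the case $s_1=t_1$ forces $s_2=t_2$ at once. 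If $s_1<t_1$, the same reasoning applied to the $\psi_2$-identity yields $s_2>t_2$, so that $s_2/s_1>t_2/t_1$, contradicting the inequality just derived. The main obstacle is the monotonicity of $\psi_i$: the \emph{$A$-part} is controlled by \eqref{a.2} and Poincaré (with \eqref{a.1} providing coercivity), but the strict decrease of the \emph{$D_sA$-part} genuinely requires \eqref{a.3}, which is strictly stronger than \eqref{a.0}--\eqref{a.2}; the sign $\beta<0$ of the competitive regime is what subsequently converts the scalar monotonicity into uniqueness of the critical point.
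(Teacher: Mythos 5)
Your proof is correct and follows essentially the same route as the paper: both rewrite the critical-point equations as strictly decreasing scalar functions of $t_i$ (an $A$-part controlled by \eqref{a.1}, \eqref{a.2} and the bound $\lambda_i<\frac{p-2-\gamma}{p-2}\nu\mu_1$, plus a $D_sA$-part whose strict decrease is exactly \eqref{a.3}) set equal to a $\beta$-coupling term, and then use $\beta<0$ to exclude a second solution --- the paper normalizes one critical point to $(1,1)$ and compares $t_{\min}$, $t_{\max}$ with $1$, while you compare two critical points directly through the ratios $t_2/t_1$, which is the same argument up to a change of variables. The only loose end is your assertion that $\int_\Omega|u_1|^{p/2}|u_2|^{p/2}$ is strictly positive, which can fail if $u_1,u_2$ have disjoint supports; in that degenerate case the coupling term vanishes and strict monotonicity of each $\psi_i$ yields uniqueness at once, so the conclusion is unaffected.
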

\begin{proof}
Since $A_i(x,-s)=A_i(x,s)$, we have $h_u=h_{(|u_1|,|u_2|)}$ and $u\in\mathcal N_{\lambda,\beta}$ if and only if $(|u_1|,|u_2|)\in\mathcal N_{\lambda,\beta}$; hence we may assume that $u_1,u_2\ge0$. Since $u\in\mathcal N_{\lambda,\beta}$, $(1,1)$ is a critical point for $h_u$. Let $t\ne(1,1)$ be a second critical point.  Then 
  \begin{align*}
    0&=t_1\int_\Omega[ A_1(x,t_1u_1)\nabla u_1\cdot\nabla u_1-\lambda_1u_1^2]+\frac{t_1^2}{2}\int_\Omega \left(D_{s}A_1(x,t_1u_1)\nabla u_1\cdot\nabla u_1\right)u_1\\
    &\quad-t_1^{p-1}\int_\Omega |u_1|^p-\beta t_1^{\frac p2-1}t_2^{\frac p2}\int_\Omega|u_1|^{\frac p2}|u_2|^{\frac p2},\\
    0&=t_2\int_\Omega[ A_2(x,t_2u_2)\nabla u_2\cdot\nabla u_2-\lambda_2u_2^2]+\frac{t_2^2}{2}\int_\Omega \left(D_{s}A_2(x,t_2u_2)\nabla u_2\cdot\nabla u_2\right)u_2\\
    &\quad-t_2^{p-1}\int_\Omega |u_2|^p-\beta t_1^{\frac p2}t_2^{\frac p2-1}\int_\Omega|u_1|^{\frac p2}|u_2|^{\frac p2},
  \end{align*}
  where $D_sA_i(x,z):=D_zA_i(x,z)$ for every $z\in\R$ and $i=1,2$.
Using once again the fact that $u\in\Nb$:
\[
\resizebox{\linewidth}{!}{$\displaystyle
\begin{aligned}
    &\int_\Omega [t_1A_1(x,t_1u_1)-t_1^{p-1}A_1(x,u_1)]\nabla u_1\cdot\nabla u_1-(t_1-t_1^{p-1})\lambda_1\int_\Omega u_1^2\\
    +&\frac12\int_\Omega [t_1^2D_{s}A_1(x,t_1u_1)-t_1^{p-1}D_{s}A_1(x,u_1)]\nabla u_1\cdot\nabla u_1\,u_1=\beta(t_2^{\frac p2}t_1^{\frac p2-1}-t_1^{p-1})\int_\Omega |u_1|^{\frac p2}|u_2|^{\frac p2},
\end{aligned}
$}
\]
\[
\resizebox{\linewidth}{!}{$\displaystyle
\begin{aligned}
    &\int_\Omega [t_2A_2(x,t_2u_2)-t_2^{p-1}A_2(x,u_2)]\nabla u_2\cdot\nabla u_2-(t_2-t_2^{p-1})\lambda_2\int_\Omega u_2^2\\
    +&\frac12\int_\Omega [t_2^2D_{s}A_2(x,t_2u_2)-t_2^{p-1}D_{s}A_2(x,u_2)]\nabla u_2\cdot\nabla u_2\,u_2=\beta(t_1^{\frac p2}t_2^{\frac p2-1}-t_2^{p-1})\int_\Omega |u_1|^{\frac p2}|u_2|^{\frac p2}.
\end{aligned}
$}
\]
We write the left-hand side (in both the equations) as 
\begin{equation}\label{lhs}
\begin{aligned}
 t_i^{p-1}\bigg\{
 &\int_\Omega
 \left[\frac{A_i(x,t_iu_i)}{t_i^{p-2}}-A_i(x,u_i)\right]
 \nabla u_i\cdot\nabla u_i
 -\lambda_i\left(\frac{1}{t_i^{p-2}}-1\right)\int_\Omega u_i^2 \\
 &\quad+\frac12\int_\Omega
 \left[\frac{t_iu_iD_sA_i(x,t_iu_i)}{t_i^{p-2}}
 -u_iD_sA_i(x,u_i)\right]
 \nabla u_i\cdot\nabla u_i
 \bigg\},
\end{aligned}
\end{equation}
and we have 
\begin{align*}
\Phi_1(t_i)&=\int_\Omega
\frac{A_i(x,t_iu_i)\nabla u_i\cdot\nabla u_i-\lambda_i u_i^2}{t_i^{p-2}},\\
\Phi_2(t_i)&=\int_\Omega
\frac{t_iu_iD_sA_i(x,t_iu_i)}{t_i^{p-2}}
\nabla u_i\cdot\nabla u_i
\end{align*}
are strictly decreasing w.r.t. the variable $t_i$ provided that
$\lambda_i<\frac{p-2-\gamma}{p-2}\nu\mu_1$. Indeed, we compute
\[
\resizebox{\linewidth}{!}{$\displaystyle
\begin{aligned}
\Phi_1'(t_i)
&=t_i^{1-p}\int_\Omega\big[ t_iu_iD_sA_i(x,t_iu_i)\nabla u_i\cdot\nabla u_i
-(p-2)A_i(x,t_iu_i)\nabla u_i\cdot\nabla u_i
+(p-2)\lambda_i u_i^2\big]\\
&\le t_i^{1-p}\left[-(p-2-\gamma)\int_\Omega A_i(x,t_iu_i)\nabla u_i\cdot\nabla u_i
+(p-2)\lambda_i\int_\Omega u_i^2\right]\\
&\le t_i^{1-p}\left[-(p-2-\gamma)\nu\int_\Omega |\nabla u_i|^2
+(p-2)\lambda_i\int_\Omega u_i^2\right]\\
&\le t_i^{1-p}\left[-(p-2-\gamma)\nu\mu_1+(p-2)\lambda_i\right]
\int_\Omega u_i^2<0.
\end{aligned}
$}
\]
It remains to prove that $\Phi_2$ is decreasing in $t_i$. We write
\[
\Phi_2(t_i)=\int_\Omega t_i^{3-p}u_iD_sA_i(x,t_iu_i)\nabla u_i\cdot\nabla u_i.
\]
On $\{u_i>0\}$, this is written in terms of $(t_iu_i)^{3-p}D_sA_i(x,t_iu_i)$, which is decreasing by \eqref{a.3}, while the integrand vanishes on $\{u_i=0\}$.

Thus, the left-hand side \eqref{lhs} in the  $i$-th equation is positive if $t_i<1$ and it is negative when $t_i>1$. 
Let  $t_{\max}=\max\{t_1,t_2\}$ and $t_{\min}=\min\{t_1,t_2\}$, then the right-hand side is non-positive (and the left-hand side is positive) if $t_i=t_{\min}<1$ and it is non-negative if $t_i=t_{\max}>1$ (and the left-hand side is negative), hence we get a contradiction.
\end{proof}
\begin{remark}
Assumption \eqref{a.3} is needed only in the proof of Lemma \ref{unique critical point}.
\end{remark}
\noindent We point out that 
\begin{equation}\label{critical point and projection}
\begin{aligned}
&t=(t_1,t_2)\in(0,+\infty)^2\text{ is a critical point for }h_u\\
&\qquad\Longleftrightarrow\quad
tu=(t_1u_1,t_2u_2)\in\mathcal N_{\lambda,\beta}.
\end{aligned}
\end{equation}
We set 
\begin{align*}
   {\mathcal U}&:=\left\{u\in \mathcal H:  h_u\ \text{has a critical point}\right\} \\
   &=\left\{u\in \mathcal H:\ tu\in\mathcal N_{\lambda,\beta},\ \text{for some $t\in(0,+\infty)^2$} \right\}
\end{align*}
and $  \mathbb T:=\partial B_1\times \partial B_1$  the product of 2 spheres in $H_0^1(\Omega;\R)$.  We are following the procedure described in \cite{clapp2019simple} for the Laplacian, i.e. we transfer the minimizing problem on $\Nb$ to a problem on the $C^1$-manifold $\mathbb T$.
\begin{lemma}\label{max existence}
    Let $u=(u_1,u_2)\in\mathcal U$. The unique critical point of $h_u$ is the global maximizer.
\end{lemma}
\begin{proof}
    We assume that $u\in\mathcal N_{\lambda,\beta}$. Otherwise, we consider $\tilde u:=\tilde t u=(\tilde t_1u_1,\tilde t_2 u_2)$ and we note that $h_u(t)=h_{\tilde u}(t/\tilde t)$.  If $u\in\mathcal N_{\lambda,\beta}$, the map $h_u$ has the unique critical point $t=(1,1)$. We claim that $h_u$ attains the global maximum in a compact set $K\subset(0,+\infty)^2$. Indeed, 
    \[h_u(s)\ge\sum_{i=1}^2\left[\frac{s_i^2}{2}\int_\Omega[ A_i(x,s_iu_i)\nabla u_i\cdot\nabla u_i-\lambda_iu_i^2]-\frac{s_i^p}{p}\int_\Omega|u_i|^p\right].\]
   Hence, $(0,0)$ is a local minimizer.
Moreover, 
\[
\begin{aligned}
h_u(s_1,s_2)-h_u(s_1,0)
&\ge\frac{s_2^2}{2}\left(\int_\Omega A_2(x,s_2u_2)
\nabla u_2\cdot\nabla u_2-\lambda_2\int_\Omega u_2^2\right)\\
&\quad-\frac{s_2^p}{p}\int_\Omega|u_2|^p>0,
\end{aligned}
\]
for \(s_2>0\) sufficiently small. By symmetry, the analogous estimate holds near the other axis. Hence no maximizer can lie on \(\partial([0,+\infty)^2)\).

   Additionally,  since $u\in\Nb$, we have 
   \begin{align}\label{condizione max}
       \int_\Omega|u_i|^p+\beta\int_\Omega |u_1|^{\frac p2}|u_2|^{\frac p2}>0,\ \ i=1,2.
   \end{align}
Now, we consider the function
\[
\begin{aligned}
h_u(s)&=\sum_{i=1}^2\left[\frac{s_i^2}{2}\int_\Omega
[A_i(x,s_iu_i)\nabla u_i\cdot\nabla u_i-\lambda_iu_i^2]
-\frac{s_i^p}{p}\int_\Omega |u_i|^p\right]\\
&\quad-\frac{2\beta}{p}s_1^{\frac p2}s_2^{\frac p2}
\int_\Omega|u_1|^{\frac p2}|u_2|^{\frac p2}.
\end{aligned}
\]
We treat the higher-order term. Since $u\in\Nb$, the two Nehari identities imply
\[
\int_\Omega |u_i|^p+
\beta\int_\Omega |u_1|^{\frac p2}|u_2|^{\frac p2}>0,
\qquad i=1,2.
\]
If
\[
\int_\Omega |u_1|^{\frac p2}|u_2|^{\frac p2}=0,
\]
then the $p$-order part is simply
\[
s_1^p\int_\Omega|u_1|^p+s_2^p\int_\Omega|u_2|^p
\]
and it is positive definite. Otherwise, since $\beta<0$, the previous inequalities yield
\[
\int_\Omega|u_1|^p>-\beta\int_\Omega |u_1|^{\frac p2}|u_2|^{\frac p2},
\qquad
\int_\Omega|u_2|^p>-\beta\int_\Omega |u_1|^{\frac p2}|u_2|^{\frac p2},
\]
and therefore
\[
\left(\int_\Omega|u_1|^p\right)
\left(\int_\Omega|u_2|^p\right)>
\beta^2\left(\int_\Omega |u_1|^{\frac p2}|u_2|^{\frac p2}\right)^2.
\]
Thus the quadratic form associated with the variables $s_1^{\frac p2}$ and $s_2^{\frac p2}$ is positive definite. Hence there exists $\delta>0$ such that
\[
s_1^p\int_\Omega|u_1|^p
+2\beta s_1^{\frac p2}s_2^{\frac p2}\int_\Omega |u_1|^{\frac p2}|u_2|^{\frac p2}
+s_2^p\int_\Omega|u_2|^p
\ge\delta(s_1^p+s_2^p).
\]
On the other hand, by hypothesis \eqref{a.0}, there exists $C>0$ such that the quadratic part of $h_u$ is bounded from above by $C(s_1^2+s_2^2)$. Since $p>2$, the positive $p$-order term dominates this quadratic part at infinity, and consequently $h_u(s)\to-\infty$ as $|s|\to+\infty$. This geometry ensures the existence of the maximizer in $(0,+\infty)^2$, which is unique by Lemma \ref{unique critical point}.  
\end{proof}
We define $ t_u=(t_{u,1},t_{u,2}):=\displaystyle{\arg\max_{t\in(0,+\infty)^2}}h_u(t)$ the maximizer found with Lemmas \ref{unique critical point}, \ref{max existence}, and  the map
\begin{align*}
\tilde{\mathfrak m}:\ \mathcal U&\to\mathcal N_{\lambda,\beta}\\
u=(u_1,u_2)&\mapsto t_uu=(t_{u,1}u_1,t_{u,2}u_2).
\end{align*}

\begin{lemma}
    The set $\mathcal U\subset H_0^1(\Omega;\R^2)$ is open in $H_0^1(\Omega;\R^2)$ and the map $\mathcal U\ni u\mapsto t_u\in(0,+\infty)^2$ is continuous with respect to the strong topology of $H_0^1(\Omega;\R^2)$.
\end{lemma}
\begin{proof}
    Let $u=(u_1,u_2)\in\mathcal U$ and we consider a ball $B_\varepsilon(u)$. As in Lemma \ref{max existence}, we can assume $u\in\mathcal N_{\lambda,\beta}$ and $t_u=(1,1)$ is the global maximizer of $h_u.$ Let $v=v^\varepsilon\in B_\varepsilon(u)$. We recall that:
    \begin{align*}
       \int_\Omega|u_i|^p>-\beta\int_\Omega|u_1|^{\frac p2}|u_2|^{\frac p2},\ \ i=1,2,
   \end{align*}
   by the definition of $\Nb$.
    Since $\|u-v^\varepsilon\|<\varepsilon$, the same inequality holds for $v^\varepsilon$. Indeed, if $v^\varepsilon$ satisfies
    \[\int_\Omega |v^\varepsilon|^p+\beta\int_\Omega |v_1^\varepsilon|^{\frac p2}|v_2^\varepsilon|^{\frac p2}\le0,\]
    passing to the limit as $\varepsilon\to0^+$ we get a contradiction and the claim follows.
    
    As in Lemma \ref{max existence}, there exists the global maximizer for $h_v$ and it is the unique critical point $t_v$ (by Lemma \ref{unique critical point}). Thus, $t_v v\in\mathcal N_{\lambda,\beta}$ and $v\in\mathcal U$.

We now prove that the map $u\mapsto t_u$ is continuous with respect to the strong topology of $H_0^1(\Omega;\mathbb R^2)$. Let $v=u_n$ with $u_n\to u$ in $H_0^1(\Omega;\R^2)$ and let $\{t_{u_n}\}$ be the sequence of the maximizers of $h_{u_n}$. We claim that $\{t_{u_n}\}$ is bounded in $\R^2$. Otherwise, we can assume $|t_{u_n}|\to+\infty$ up to a subsequence.  Since $t_{u_n}u_n\in\Nb$ we have: 
    \begin{align}
    0&=t_{u_{n},1}\int_\Omega[ A_1(x,t_{u_{n},1}u_{n,1})\nabla u_{n,1}\cdot\nabla u_{n,1}-\lambda_1u_{n,1}^2]\notag\\
    &\quad+\frac{t_{u_{n},1}^2}{2}\int_\Omega u_{n,1}D_{s}A_1(x,t_{u_{n},1}u_{n,1})\nabla u_{n,1}\cdot\nabla u_{n,1}\notag\\
    &\quad-t_{u_{n},1}^{p-1}\int_\Omega |u_{n,1}|^p-\beta t_{u_n,2}^{\frac p2}t_{u_{n},1}^{\frac p2-1}\int_\Omega |u_{n,2}|^{\frac p2}|u_{n,1}|^{\frac p2},\label{nehari-tun-1}
  \end{align}
  and
   \begin{align}
    0&=t_{u_{n},2}\int_\Omega[ A_2(x,t_{u_{n},2}u_{n,2})\nabla u_{n,2}\cdot\nabla u_{n,2}-\lambda_2u_{n,2}^2]\notag\\
    &\quad+\frac{t_{u_{n},2}^2}{2}\int_\Omega u_{n,2}D_{s}A_2(x,t_{u_{n},2}u_{n,2})\nabla u_{n,2}\cdot\nabla u_{n,2}\notag\\
    &\quad-t_{u_{n},2}^{p-1}\int_\Omega |u_{n,2}|^p-\beta t_{u_n,1}^{\frac p2}t_{u_{n},2}^{\frac p2-1}\int_\Omega |u_{n,1}|^{\frac p2}|u_{n,2}|^{\frac p2},\label{nehari-tun-2}
  \end{align}
  where $D_sA_i(x,z):=D_zA_i(x,z)$ for every $z\in\R$ and $i=1,2$.
Set $\rho_n:=|t_{u_n}|$ and $s_{n,i}:=t_{u_n,i}/\rho_n$, $i=1,2$. Up to a subsequence, $s_n\to s=(s_1,s_2)$, with $s_i\ge0$ and $|s|=1$. Multiplying \eqref{nehari-tun-1}-\eqref{nehari-tun-2} by $t_{u_n,i}$ and using \eqref{a.0} and \eqref{a.2}, we get, for $i=1,2$,
\[
t_{u_n,i}^p\int_\Omega |u_{n,i}|^p
+\beta t_{u_n,1}^{\frac p2}t_{u_n,2}^{\frac p2}
\int_\Omega |u_{n,1}|^{\frac p2}|u_{n,2}|^{\frac p2}
\le C t_{u_n,i}^2.
\]
Dividing by $\rho_n^p$ and passing to the limit, we obtain
\[
s_i^p\int_\Omega |u_i|^p
+\beta s_1^{\frac p2}s_2^{\frac p2}
\int_\Omega |u_1|^{\frac p2}|u_2|^{\frac p2}\le0,
\qquad i=1,2,
\]
Since $u\in\mathcal U$, we can assume that $u\in\Nb$, we have
\[
\int_\Omega |u_i|^p
+\beta\int_\Omega |u_1|^{\frac p2}|u_2|^{\frac p2}>0,
\qquad i=1,2,
\]
and hence
\begin{equation}\label{strict-ineq-limiting-system}
\left(\int_\Omega |u_1|^p\right)
\left(\int_\Omega |u_2|^p\right)
>\beta^2\left(\int_\Omega |u_1|^{\frac p2}|u_2|^{\frac p2}\right)^2.
\end{equation}
 This contradicts the limiting system: if one of $s_1,s_2$ is zero the contradiction is immediate. If both are positive, then, dividing the two limiting inequalities by $s_1^{\frac p2}$ and $s_2^{\frac p2}$, respectively, we obtain
\[
 s_1^{\frac p2}\int_\Omega |u_1|^p
 +\beta s_2^{\frac p2}\int_\Omega |u_1|^{\frac p2}|u_2|^{\frac p2}\le0,
\]
and
\[
 s_2^{\frac p2}\int_\Omega |u_2|^p
 +\beta s_1^{\frac p2}\int_\Omega |u_1|^{\frac p2}|u_2|^{\frac p2}\le0.
\]
Since $\beta<0$, these inequalities imply
\[
 s_1^{\frac p2}\int_\Omega |u_1|^p
 \le -\beta s_2^{\frac p2}\int_\Omega |u_1|^{\frac p2}|u_2|^{\frac p2},
\qquad
 s_2^{\frac p2}\int_\Omega |u_2|^p
 \le -\beta s_1^{\frac p2}\int_\Omega |u_1|^{\frac p2}|u_2|^{\frac p2}.
\]
Multiplying them and simplifying the positive factor $s_1^{\frac p2}s_2^{\frac p2}$, we get
\[
\left(\int_\Omega |u_1|^p\right)
\left(\int_\Omega |u_2|^p\right)
\le \beta^2\left(\int_\Omega |u_1|^{\frac p2}|u_2|^{\frac p2}\right)^2,
\]
which contradicts \eqref{strict-ineq-limiting-system}. Thus, $\{t_{u_n}\}$ is bounded and, up to a subsequence, \(t_{u_n}\to\bar t\in[0,+\infty)^2\). We claim that $\bar t_i>0$ for every $i=1,2$. Indeed, assume by contradiction that $\bar t_i=0$ for some $i$. Dividing the corresponding identity in \eqref{nehari-tun-1}-\eqref{nehari-tun-2} by $t_{u_n,i}$, we have
\begin{align*}
0&=\int_\Omega A_i(x,t_{u_n,i}u_{n,i})\nabla u_{n,i}\cdot\nabla u_{n,i}-\lambda_i\int_\Omega u_{n,i}^2\\
&\quad+\frac{t_{u_n,i}}{2}\int_\Omega u_{n,i}D_sA_i(x,t_{u_n,i}u_{n,i})\nabla u_{n,i}\cdot\nabla u_{n,i}-t_{u_n,i}^{p-2}\int_\Omega |u_{n,i}|^p\\
&\quad-\beta t_{u_n,j}^{\frac p2}t_{u_n,i}^{\frac p2-2}\int_\Omega |u_{n,j}|^{\frac p2}|u_{n,i}|^{\frac p2},
\end{align*}
where $j\ne i$. Using \eqref{a.2} and $\beta<0$, we get
\[
\int_\Omega A_i(x,t_{u_n,i}u_{n,i})\nabla u_{n,i}\cdot\nabla u_{n,i}
-\lambda_i\int_\Omega u_{n,i}^2
\le t_{u_n,i}^{p-2}\int_\Omega |u_{n,i}|^p.
\]
Passing to the limit, since $u_i\not\equiv0$ and $\lambda_i<\nu\mu_1$, we obtain
\[
0<\left(\nu-\frac{\lambda_i}{\mu_1}\right)\|u_i\|^2\le0,
\]
a contradiction. Hence \(\bar t\in(0,+\infty)^2\). Therefore \(\bar{t}u\in \Nb\). The uniqueness of the critical point, from Lemma \ref{unique critical point}, and \eqref{critical point and projection} imply also that \(\bar{t} = t_u\).
    
\end{proof}
\begin{corollary}
    The map $\mathfrak m:=\tilde{\mathfrak m}_{|_{\mathcal U\cap\mathbb T}}:\mathcal U\cap\mathbb T\to\mathcal N_{\lambda,\beta}$ is a homeomorphism with inverse
    \[\mathfrak m^{-1}(u)=\left(\frac{u_1}{\|u_1\|},\frac{u_2}{\|u_2\|}\right),\]
for every $u=(u_1,u_2)\in\Nb$.
\end{corollary}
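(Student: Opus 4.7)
The plan is to verify, in order, that both maps are well-defined, that they are mutual inverses pointwise, and that both are continuous. None of these steps requires anything beyond the previous lemma; the content has essentially already been assembled.

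First I would check well-definedness. For $\mathfrak m$, if $u\in\mathcal U\cap\mathbb T$ then by definition $\mathfrak m(u)=t_u u\in\Nb$, so there is nothing to do. For the candidate inverse, take $v\in\Nb$; by Lemma~\ref{chiusura} we have $v_1,v_2\not\equiv 0$, so $\|v_1\|,\|v_2\|>0$ and the normalization $w:=(v_1/\|v_1\|,v_2/\|v_2\|)$ lies in $\mathbb T$. To see $w\in\mathcal U$, observe that with $t=(\|v_1\|,\|v_2\|)\in(0,+\infty)^2$ we have $tw=v\in\Nb$, so by \eqref{critical point and projection} the function $h_w$ has a critical point.

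Next I would verify the two identities. For $u\in\mathcal U\cap\mathbb T$, the image $\mathfrak m(u)=(t_{u,1}u_1,t_{u,2}u_2)$ satisfies $\|t_{u,i}u_i\|=t_{u,i}\|u_i\|=t_{u,i}$, so dividing each component by its norm returns $u$. In the other direction, for $v\in\Nb$ let $w$ be the normalization as above; the vector $t=(\|v_1\|,\|v_2\|)$ is a critical point of $h_w$ via $tw=v\in\Nb$, and by Lemma~\ref{unique critical point} it is the \emph{only} critical point, hence $t_w=t$ and Lemma~\ref{max existence} identifies it with the global maximizer. Therefore $\mathfrak m(w)=t_w w=(\|v_1\|\,v_1/\|v_1\|,\|v_2\|\,v_2/\|v_2\|)=v$.

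Finally I would address continuity. The map $\mathfrak m$ is continuous because $u\mapsto t_u$ is continuous on $\mathcal U$ by the preceding lemma, and the scalar-by-vector multiplication $(t,u)\mapsto(t_1u_1,t_2u_2)$ is continuous on $(0,+\infty)^2\times H_0^1(\Omega;\R^2)$. The map $\mathfrak m^{-1}$ is continuous on $\Nb$ since, by Lemma~\ref{chiusura}, the norms $\|v_i\|$ are bounded below by a positive constant $C_1>0$ on $\Nb$, so $v\mapsto(v_1/\|v_1\|,v_2/\|v_2\|)$ is a composition of continuous operations with non-vanishing denominators. The only step that deserves any care is the use of uniqueness of the critical point of $h_w$ to force $t_w=(\|v_1\|,\|v_2\|)$; once this is in hand, everything else is formal.
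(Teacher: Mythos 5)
Your proof is correct and is exactly the routine verification the paper leaves implicit: the corollary is stated without proof as an immediate consequence of Lemmas \ref{unique critical point}, \ref{max existence}, \ref{chiusura}, \eqref{critical point and projection} and the continuity of $u\mapsto t_u$ from the preceding lemma. The one point requiring slight care --- applying uniqueness of the critical point of $h_w$ to $w\in\mathcal U\cap\mathbb T$ rather than to an element of $\Nb$ --- is covered by the rescaling identity $h_u(t)=h_{\tilde u}(t/\tilde t)$ already invoked in the proof of Lemma \ref{max existence}, so nothing is missing.
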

\begin{remark}\label{remark boundary U}
Let $\lambda_1,\lambda_2<\nu\mu_1$. We claim that $\mathcal U\cap\mathbb T\ne\mathbb T$ provided that $\beta<-1$. Let $(v,v)\in\mathbb T$ and we assume that there exists $t\in(0,+\infty)^2$ such that  $(t_1v,t_2v)\in\mathcal N_{\lambda,\beta}$. Then
    \begin{align}\label{first inequality}
        0<t_1^2\int_\Omega[ A_1(x,t_1v)\nabla v\cdot\nabla v-\lambda_1v^2]\le (t_1^p+\beta t_1^{\frac p2}t_2^{\frac p2})\int_\Omega |v|^p
    \end{align}
   and also
\begin{align}\label{second inequality}
   0<t_2^2\int_\Omega[A_2(x,t_2v)\nabla v\cdot\nabla v-\lambda_2v^2]\le (t_2^p+\beta t_1^{\frac p2}t_2^{\frac p2})\int_\Omega |v|^p.
\end{align}
   If $t_1\le t_2$ and $\beta<-1$, we get a contradiction in \eqref{first inequality}, otherwise we conclude using \eqref{second inequality}. 
     In particular, $\mathcal U\cap\mathbb T$ has boundary $\partial\mathcal U\cap\mathbb T$.

    Moreover, since $\mathcal N_{\lambda,\beta}$ is closed, if $\{u_n\}\subset \mathcal U\cap\mathbb T$ and $u_n\to u\in\partial \mathcal U\cap\mathbb T$, then $\|\mathfrak m(u_n)\|\to+\infty$. Indeed, write $\mathfrak m(u_n)=t_nu_n$, with $t_n=(t_{n,1},t_{n,2})\in(0,+\infty)^2$. If $\|\mathfrak m(u_n)\|$ were bounded, then $\{t_n\}$ would be bounded and, up to a subsequence, $t_n\to\bar t$. Hence $\mathfrak m(u_n)=t_nu_n\to\bar t u$, and the closedness of $\Nb$ would give $\bar t u\in\Nb$. Therefore $u\in\mathcal U$, contradicting $u\in\partial\mathcal U\cap\mathbb T$.
\end{remark}
Finally, we define the extended functional
\[
\widetilde{\mathcal I}_{\lambda,\beta}:\mathcal U\to\R,
\qquad
\widetilde{\mathcal I}_{\lambda,\beta}(u):=\Jb(\tilde{\mathfrak m}(u)).
\]
Then $\mathcal I_{\lambda,\beta}:\mathcal U\cap\mathbb T\to\R$ is the restriction of $\widetilde{\mathcal I}_{\lambda,\beta}$ to $\mathcal U\cap\mathbb T$, namely $\mathcal I_{\lambda,\beta}:=\Jb\circ\mathfrak m$, and
\[
\begin{aligned}
\mathcal I_{\lambda,\beta}(v)
&=\frac{p-2}{2p}\sum_{i=1}^2\int_\Omega
\left[A_i(x,v_i)\nabla v_i\cdot\nabla v_i-\lambda_i v_i^2\right]\\
&\quad-\frac{1}{2p}\sum_{i=1}^2\int_\Omega
\left(D_{v_i}A_i(x,v_i)\nabla v_i\cdot\nabla v_i\right)v_i.
\end{aligned}
\]
Moreover,
\[l_{\lambda,\beta}:=\inf_{u\in\mathcal N_{\lambda,\beta}}\Jb(u)=\inf_{v\in\mathcal U\cap\mathbb T}\mathcal I_{\lambda,\beta}(v).\]
We denote by $T_v(\mathbb T)$ the tangent space of $\mathbb T$ at some $v\in\mathbb T$: 
\[T_v(\mathbb T):=\left\{\varphi=(\varphi_1,\varphi_2)\in H_0^1(\Omega;\R^2):\ \langle \varphi_1, v_1\rangle_{H_0^1}=0,\ \langle \varphi_2, v_2\rangle_{H_0^1}=0\right\},\]
and we study the differentiability of the extended functional $\widetilde{\mathcal I}_{\lambda,\beta}$.
\begin{lemma}\label{I differentiable}
   Let $u\in\mathcal U$. The functional $\widetilde{\mathcal I}_{\lambda,\beta}$ is G\^ateaux differentiable along any direction $\varphi\in C_c^\infty(\Omega;\R^2)$. In particular, we have
   \[
   \langle\widetilde{\mathcal I}_{\lambda,\beta}'(u),\varphi\rangle=\langle\Jb'(\tilde{\mathfrak m}(u)),t_u\varphi\rangle.
  \]
   Consequently, if $v\in\mathcal U\cap\mathbb T$, then
   \[
   \langle\mathcal I_{\lambda,\beta}'(v),\varphi\rangle=\langle\Jb'(\mathfrak m(v)),t_v\varphi\rangle,
  \]
    for every $\varphi\in T_v(\mathbb T)\cap C_c^\infty(\Omega;\R^2)$.
\end{lemma}
\begin{proof}
    Let $u\in\mathcal U$ and $\varphi\in C_c^\infty(\Omega;\R^2)$. Since $\mathcal U$ is open, we have $u+\delta\varphi\in\mathcal U$ for $|\delta|$ small enough. Moreover, $\tilde{\mathfrak m}(u)=t_uu$, where $t_u$ is the maximizer of $h_u$. By the Mean Value Theorem, we have
    \begin{align*}
        \widetilde{\mathcal I}_{\lambda,\beta}(u+\delta\varphi)-\widetilde{\mathcal I}_{\lambda,\beta}(u)&=\Jb(t_{u+\delta\varphi}(u+\delta\varphi))-\Jb(t_uu)\\
        &\le\Jb(t_{u+\delta\varphi}(u+\delta\varphi))-\Jb(t_{u+\delta\varphi}u)\\
        &=\langle\Jb'(t_{u+\delta\varphi}(u+\theta_1\delta\varphi)),\delta t_{u+\delta\varphi}\varphi\rangle,
    \end{align*}
    for $|\delta|$ small enough and $\theta_1\in(0,1)$. Similarly, 
    \[
    \begin{aligned}
    \widetilde{\mathcal I}_{\lambda,\beta}(u+\delta\varphi)
    -\widetilde{\mathcal I}_{\lambda,\beta}(u)
    &\ge\Jb(t_{u}(u+\delta\varphi))-\Jb(t_uu)\\
    &=\langle\Jb'(t_u(u+\theta_2\delta\varphi)),
    \delta t_u\varphi\rangle,
    \end{aligned}
    \]
    for some $\theta_2\in(0,1)$. Passing to the limit as $\delta\to0$, and using the continuity of $u\mapsto t_u$, we obtain
    \begin{align*}
        \lim_{\delta\to0}\frac{ \widetilde{\mathcal I}_{\lambda,\beta}(u+\delta\varphi)-\widetilde{\mathcal I}_{\lambda,\beta}(u)}{\delta}=\langle\Jb'(t_uu),t_u\varphi\rangle.
    \end{align*}
    If $v\in\mathcal U\cap\mathbb T$ and $\varphi\in T_v(\mathbb T)\cap C_c^\infty(\Omega;\R^2)$, then the above identity restricts to the tangent directions of $\mathbb T$ because $\mathcal I_{\lambda,\beta}=\widetilde{\mathcal I}_{\lambda,\beta}|_{\mathcal U\cap\mathbb T}$.
\end{proof}
\begin{theorem}\label{PS sequence}
    There exists a minimizing sequence $\{v_n\}\subset\mathcal U\cap\mathbb T$ for $\mathcal I_{\lambda,\beta}$ such that, setting $u_n:=\mathfrak m(v_n)=t_{v_n}v_n$, we have:
    \begin{enumerate}
        \item[$(i)$] $\{v_n\}$ is a $(PS)$-sequence for $\mathcal I_{\lambda,\beta}$ at level $l_{\lambda,\beta}$.
        \item[$(ii)$] $\{u_n\}$ is a $(CPS)$-sequence for $\Jb$ at level $l_{\lambda,\beta}$.
    \end{enumerate}
\end{theorem}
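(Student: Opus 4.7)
The plan is to establish (i) via Ekeland's variational principle applied to $\mathcal I_{\lambda,\beta}$ on the complete metric space $\mathbb T$, and to deduce (ii) from (i) by exploiting the homeomorphism $\mathfrak m$ together with the chain rule of Lemma~\ref{I differentiable}.

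For (i), I would regard $\mathbb T=\partial B_1\times\partial B_1$ as a complete metric space (a closed subset of $H_0^1(\Omega;\R^2)$) and extend $\mathcal I_{\lambda,\beta}$ to all of $\mathbb T$ by setting it equal to $+\infty$ on $\mathbb T\setminus\mathcal U$. This extension is lower semi-continuous: by the remark preceding the definition of $\mathcal I_{\lambda,\beta}$, if $v_n\in\mathcal U\cap\mathbb T$ converges to some $v\in\partial\mathcal U\cap\mathbb T$, then $\|\mathfrak m(v_n)\|\to+\infty$, and Lemma~\ref{coerciva} forces $\mathcal I_{\lambda,\beta}(v_n)=\Jb(\mathfrak m(v_n))\to+\infty$. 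Ekeland's principle on $\mathbb T$ then produces, for each $n$, a point $\tilde v_n$ arbitrarily close to $v_n$ with $\mathcal I_{\lambda,\beta}(\tilde v_n)\le\mathcal I_{\lambda,\beta}(v_n)$ and $|d\mathcal I_{\lambda,\beta}|(\tilde v_n)\to 0$. Since the perturbed values remain finite, $\tilde v_n\in\mathcal U\cap\mathbb T$; relabeling $\tilde v_n$ as $v_n$ and updating $u_n=\mathfrak m(v_n)$ via continuity of $v\mapsto t_v$ yields the asserted $(PS)$ property.

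For (ii), the convergence of energies is automatic: $\Jb(u_n)=\Jb(\mathfrak m(v_n))=\mathcal I_{\lambda,\beta}(v_n)\to l_{\lambda,\beta}$. To handle the weak-slope condition I first observe that Lemma~\ref{coerciva} bounds $\|u_n\|$ uniformly, hence $t_{v_n,i}=\|u_{n,i}\|$ is bounded above, while Lemma~\ref{chiusura} provides a uniform positive lower bound, so $\{t_{v_n}\}$ lies in a compact subset of $(0,+\infty)^2$. Given any test function $\psi\in C_c^\infty(\Omega;\R^2)$ with $\|\psi\|\le 1$, I would decompose $\psi = t_{v_n}\varphi + \alpha v_n$ with $\varphi\in T_{v_n}(\mathbb T)$ and $\alpha\in\R$ uniquely determined by the constraint $\langle\varphi,v_n\rangle=0$; the compactness of $\{t_{v_n}\}$ yields $\|\varphi\|\le C\|\psi\|$ with $C$ independent of $n$. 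The Nehari identity for $u_n\in\Nb$ gives
\[
\langle\Jb'(u_n),v_n\rangle \;=\; \sum_{i=1}^{2}\frac{1}{t_{v_n,i}}\langle\Jb'(u_n)_i,u_{n,i}\rangle \;=\; 0,
\]
while Lemma~\ref{I differentiable} yields $\langle\Jb'(u_n),t_{v_n}\varphi\rangle = \langle\mathcal I_{\lambda,\beta}'(v_n),\varphi\rangle$. Combining,
\[
|\langle\Jb'(u_n),\psi\rangle| \;=\; |\langle\mathcal I_{\lambda,\beta}'(v_n),\varphi\rangle| \;\le\; |d\mathcal I_{\lambda,\beta}|(v_n)\,\|\varphi\| \;\le\; C\,|d\mathcal I_{\lambda,\beta}|(v_n)\to 0,
\]
and Theorem~\ref{differentiability Jb} converts this into the $H^{-1}$-decay of $Q_iu_{n,i}-\lambda_iu_{n,i}-g_{\beta,i}(u_n)$, i.e.\ the $(CPS)$ condition.

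The main obstacle will be reconciling the decomposition $\psi=t_{v_n}\varphi+\alpha v_n$ with Lemma~\ref{I differentiable}: the lemma is stated for smooth tangent directions, whereas the resulting $\varphi_i=(\psi_i-\alpha v_{n,i})/t_{v_n,i}$ is only in $H_0^1(\Omega;\R^2)\cap T_{v_n}(\mathbb T)$ in general. A density step is therefore needed, approximating $v_n$ by cut-offs in $C_c^\infty$, building smooth $\varphi^\varepsilon$ that are tangent to the approximated configuration, and passing to the limit using continuity of the pairings involved (together with the bounds on $t_{v_n}$ and $\|u_n\|$). A second delicate point is the lower semi-continuity of the $+\infty$-extension of $\mathcal I_{\lambda,\beta}$ across $\partial\mathcal U\cap\mathbb T$, which ultimately rests on combining the coercivity from Lemma~\ref{coerciva} with the boundary blow-up $\|\mathfrak m(v_n)\|\to\infty$ recorded in the remark preceding the statement.
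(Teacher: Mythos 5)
Your proposal follows essentially the same route as the paper: part (i) is obtained from Ekeland's variational principle combined with the bound of the weak slope by the strong slope, and part (ii) from the chain rule of Lemma~\ref{I differentiable}, the two-sided bounds on $t_{v_n}$ (upper bound from coercivity, lower bound from Lemma~\ref{chiusura}), the decomposition of a test function into a tangential part plus a multiple of $(v_{n,1},v_{n,2})$, and the Nehari identities to annihilate the radial component. Your two refinements --- extending $\mathcal I_{\lambda,\beta}$ by $+\infty$ so that Ekeland is applied on a complete space, and the density step needed because the decomposition produces tangent directions that are only in $H_0^1$ --- address points the paper leaves implicit and are consistent with its argument.
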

\begin{proof}
Choose $M>l_{\lambda,\beta}$. By Remark \ref{remark boundary U} and the coercivity of $\Jb$ on $\Nb$, the sublevel set
\[
X_M:=\left\{v\in\mathcal U\cap\mathbb T:\ \mathcal I_{\lambda,\beta}(v)\le M\right\}
\]
is bounded away from $\partial\mathcal U\cap\mathbb T$. Hence $X_M$ is a closed subset of the complete metric space $\mathbb T$, and therefore it is complete. Applying Ekeland's variational principle to $\mathcal I_{\lambda,\beta}$ restricted to $X_M$, we can choose a minimizing sequence $\{v_n\}\subset X_M\subset\mathcal U\cap\mathbb T$ such that
\begin{itemize}
  \item $\mathcal I_{\lambda,\beta}(v_n)\le l_{\lambda,\beta}+\frac1n$,
  \item $\mathcal I_{\lambda,\beta}(v_n)-\mathcal I_{\lambda,\beta}(z)\le \frac1n\|v_n-z\|$ for every $z\in X_M$.
\end{itemize}
Since $\mathcal I_{\lambda,\beta}(v_n)<M$ for $n$ large, the above variational inequality holds locally in $\mathcal U\cap\mathbb T$. Thus
\[
|d\mathcal I_{\lambda,\beta}|(v_n)\to0,
\]
and $\{v_n\}$ is a $(PS)$-sequence for $\mathcal I_{\lambda,\beta}$ at level $l_{\lambda,\beta}$.

Let $t_n=(t_{n,1},t_{n,2})=t_{v_n}\in(0,+\infty)^2$ such that $\mathfrak m(v_n)=t_nv_n\in\mathcal N_{\lambda,\beta}$. 
{Set $u_n=\mathfrak m(v_n)=t_nv_n$. Since
$\Jb(u_n)=\mathcal I_{\lambda,\beta}(v_n)$ is bounded and
$u_n\in\mathcal N_{\lambda,\beta}$, the coercivity of $\Jb$ on
$\mathcal N_{\lambda,\beta}$ implies that $\{u_n\}$ is bounded in
$H_0^1(\Omega;\R^2)$. As $v_n\in\mathbb T$, it follows that
$\{t_n\}$ is bounded in $\R^2$; moreover, by Lemma \ref{chiusura},
$\|u_{n,i}\|\ge\delta>0$, hence $t_{n,i}\ge\delta>0$ for $i=1,2$.}
It remains to prove $(ii)$. By Lemma \ref{I differentiable}, we obtain:
 \[\langle\mathcal I_{\lambda,\beta}'(v_n),\varphi\rangle=\langle\Jb'(t_{v_n}v_n),t_{v_n}\varphi\rangle.\]
From Theorem \ref{theorem 1} and reasoning as in Theorem \ref{differentiability Jb}, we deduce that 
\[
\resizebox{\linewidth}{!}{$\displaystyle
|d\mathcal I_{\lambda,\beta}|(v_n)
\ge\sup_{\substack{\varphi\in T_{v_n}(\mathbb T)\cap C_c^\infty(\Omega;\R^2)\\ \|\varphi\|_{H_0^1(\Omega;\R^2)}\le1}}
\langle\mathcal I_{\lambda,\beta}'(v_n),\varphi\rangle
=\sup_{\substack{\varphi\in T_{v_n}(\mathbb T)\cap C_c^\infty(\Omega;\R^2)\\ \|\varphi\|_{H_0^1(\Omega;\R^2)}\le1}}
\langle\Jb'(t_{v_n}v_n),t_{v_n}\varphi\rangle.
$}
\]
Moreover, since $|d\mathcal I_{\lambda,\beta}|(v_n)\to0$, we have
 \begin{equation}\label{eq:tangent-control}
o(1)= |d\mathcal I_{\lambda,\beta}|(v_n)\ge\sup_{\substack{\varphi\in T_{v_n}(\mathbb T)\cap C_c^\infty(\Omega;\R^2)\\ \|\varphi\|_{H_0^1(\Omega;\R^2)}\le1}}\langle\Jb'(t_{v_n}v_n),\varphi\rangle.
\end{equation}

 Let $\psi\in H_0^1(\Omega;\R^2)$.
 Recalling that $H_0^1(\Omega;\R^2)=T_{v_n}(\mathbb T)\oplus\text{span}\{(v_{n,1},0),(0,v_{n,2})\}$, we can write 
 \[
 \psi=\xi+(\tau_1 v_{n,1}, \tau_2 v_{n,2})
\]
with $\xi\in T_{v_n}(\mathbb T)$ and $\tau=(\tau_1,\tau_2)\in\R^2$.
 By a density argument, \eqref{eq:tangent-control} also holds for $\xi$.
 Therefore,
 \begin{align*}
     o(1)&=\int_\Omega A_1(x,t_{v_n,1}v_{n,1})\nabla(t_{v_n,1}v_{n,1})\cdot\nabla\xi_1\\
     &\quad+\frac12\int_\Omega \left(D_sA_1(x,t_{v_n,1}v_{n,1})\nabla(t_{v_n,1}v_{n,1})\cdot\nabla(t_{v_n,1}v_{n,1})\right)\xi_1\\
     &\quad-\lambda_1\int_\Omega t_{v_n,1}v_{n,1}\xi_1-\int_\Omega g_{\beta,1}(t_{v_n}v_n)\xi_1,\\
     o(1)&=\int_\Omega A_2(x,t_{v_n,2}v_{n,2})\nabla(t_{v_n,2}v_{n,2})\cdot\nabla\xi_2\\
     &\quad+\frac12\int_\Omega \left(D_sA_2(x,t_{v_n,2}v_{n,2})\nabla(t_{v_n,2}v_{n,2})\cdot\nabla(t_{v_n,2}v_{n,2})\right)\xi_2\\
     &\quad-\lambda_2\int_\Omega t_{v_n,2}v_{n,2}\xi_2-\int_\Omega g_{\beta,2}(t_{v_n}v_n)\xi_2,
     \end{align*}
     where $D_sA_i(x,z):=D_zA_i(x,z)$ for every $z\in\R$ and $i=1,2$.
     Since $t_{v_n}v_n\in\Nb$, we have 
        \begin{align*}
         0&=\int_\Omega A_1(x,t_{v_n,1}v_{n,1})\nabla(t_{v_n,1}v_{n,1})\cdot\nabla v_{n,1}\\
         &\quad+\frac12\int_\Omega \left(D_sA_1(x,t_{v_n,1}v_{n,1})\nabla(t_{v_n,1}v_{n,1})\cdot\nabla(t_{v_n,1}v_{n,1})\right)v_{n,1}\\
     &\quad-\lambda_1\int_\Omega t_{v_n,1}v_{n,1}^2-\int_\Omega g_{\beta,1}(t_{v_n}v_n)v_{n,1},  \\
      0&=\int_\Omega A_2(x,t_{v_n,2}v_{n,2})\nabla(t_{v_n,2}v_{n,2})\cdot\nabla v_{n,2}\\
         &\quad+\frac12\int_\Omega \left(D_sA_2(x,t_{v_n,2}v_{n,2})\nabla(t_{v_n,2}v_{n,2})\cdot\nabla(t_{v_n,2}v_{n,2})\right)v_{n,2}\\
     &\quad-\lambda_2\int_\Omega t_{v_n,2}v_{n,2}^2-\int_\Omega g_{\beta,2}(t_{v_n}v_n)v_{n,2}.   
        \end{align*}
 Thus, $\{t_{v_n}v_n\}$ is a $(CPS)$-sequence for $\Jb$.
\end{proof}
\begin{proof}[Proof of Theorem \ref{main thm competitive quasilinear}]
    Let $\{v_n\}\subset\mathcal U\cap\mathbb T$ be the minimizing sequence given by Theorem \ref{PS sequence}. Since $\mathcal I_{\lambda,\beta}$ and $\mathcal U\cap\mathbb T$ are symmetric, we may suppose that $v_n\ge0$. Then, $u_n=\mathfrak m(v_n)\ge0$ is a $(CPS)$-sequence for $\Jb$ at level $l_{\lambda,\beta}$. By Theorem \ref{PS and boundeness} and Lemma \ref{PS1}, the functional $\Jb$ satisfies the $(CPS)$-condition also for $\beta<0,\lambda_1,\lambda_2<\frac{p-2-\gamma}{p-2}\nu\mu_1$ and we can assume, up to a subsequence, that $u_n\to u$. Lemma \ref{chiusura} implies that $u\in\Nb$. Since the strong limit of a $(CPS)$-sequence satisfies the corresponding Euler equation, $u$ is a weak solution of \eqref{Qb}.
\end{proof}


\end{document}